\newtheorem{theorem}{Theorem}
\newtheorem{lemma}[theorem]{Lemma}
\newtheorem{proposition}[theorem]{Proposition}
\newtheorem{conjecture}{Conjecture}
\def\svejk{\v{S}vejk }
\def\CC{{\mathcal C}}
\def\JJ{{\mathcal J}}
\def\CF{{\rm CF}}
\def\dd{\;\mbox{d}}
\def\NN{{\mathbb N}}
\def\RR{{\mathbb R}}
\newcommand{\scalar}[2]{\left\langle #1,#2\right\rangle}
\newcommand{\trunc}[1]{\operatorname{trunc}\left(#1\right)}
\begin{document}

\title{Weak regularity and finitely forcible graph limits~\thanks{The work of the first and third authors leading to this invention has received funding from the European Research Council under the European Union's Seventh Framework Programme (FP7/2007-2013)/ERC grant agreement no.~259385.}}
\author{Jacob W.~Cooper\thanks{Department of Computer Science, University of Warwick, Coventry CV4 7AL, UK. E-mail: {\tt j.w.cooper@warwick.ac.uk}.}\and
        Tom\'a\v{s} Kaiser\thanks{Department of Mathematics, Institute for Theoretical Computer Science (CE-ITI) and the European Centre of Excellence NTIS (New Technologies for the Information Society), University of West Bohemia, Univerzitn\'{\i}~8, 306~14~Plze\v{n}, Czech Republic. E-mail: {\tt kaisert@kma.zcu.cz}. This author was supported by the grant GA14-19503S (Graph coloring and structure) of the Czech Science Foundation.}\and
        Daniel Kr\'al'\thanks{Mathematics Institute, DIMAP and Department of Computer Science, University of Warwick, Coventry CV4 7AL, UK. E-mail: {\tt d.kral@warwick.ac.uk}. The work of this author was also supported by the Engineering and Physical Sciences Research Council Standard Grant number EP/M025365/1.}\and
	Jonathan A.~Noel\thanks{Mathematical Institute, University of Oxford, Oxford OX2 6GG, UK. E-mail: {\tt noel@maths.ox.ac.uk}.}}
\date{}	
\maketitle

\begin{abstract}	
Graphons are analytic objects representing limits of convergent sequences of graphs.
Lov\'asz and Szegedy conjectured that every finitely forcible graphon,
i.e.~any graphon determined by finitely many graph densities, has a simple structure.
In particular, one of their conjectures would imply that
every finitely forcible graphon has a weak $\varepsilon$-regular partition with the number of parts bounded by a polynomial in $\varepsilon^{-1}$.
We construct a finitely forcible graphon $W$ such that
the number of parts in any weak $\varepsilon$-regular partition of $W$ is at least
exponential in $\varepsilon^{-2}/2^{5\log^*\varepsilon^{-2}}$. 
This bound almost matches the known upper bound for graphs and, in a certain sense, is the best possible for graphons.
\end{abstract}	

\section{Introduction}
\label{sect-intro}

The theory of combinatorial limits has recently attracted a significant amount of attention.
This line of research
was sparked by limits of dense graphs~\cite{bib-borgs08+,bib-borgs+,bib-borgs06+,bib-lovasz06+},
which we focus on here, followed by limits of other structures,
e.g.~permutations~\cite{bib-hoppen-lim1,bib-hoppen-lim2,bib-kral12+},
sparse graphs~\cite{bib-bollobas11+,bib-elek07} and
partial orders~\cite{bib-janson11}.
Methods related to combinatorial limits have led to substantial results in many areas of mathematics and computer science,
particularly in extremal combinatorics.
For example, the notion of flag algebras, which is strongly related to combinatorial limits,
resulted in progress on many important problems in extremal combinatorics~\cite{bib-flag1, bib-flag2, bib-flagrecent, bib-flag3, bib-flag4, bib-flag5, bib-flag6, bib-flag7, bib-flag8, bib-flag9, bib-flag10, bib-razborov07,bib-flag11, bib-flag12}.
Theory of combinatorial limits also provided a new perspective on existing concepts in other areas,
e.g. property testing algorithms in computer science~\cite{bib-glebov-test,bib-hoppen-test,bib-lovasz10+}.

A convergent sequence of dense graphs can be represented by an analytic object called a \emph{graphon}.
Let $d(H,W)$ be the density of a graph $H$ in a graphon $W$ (a formal definition is given in Section~\ref{sec:limits}).
A graphon $W$ is said to be \emph{finitely forcible} if it is determined by finitely many graph densities,
i.e.~there exist graphs $H_1,\ldots,H_k$ and reals $d_1,\ldots,d_k$ such that $W$ is the unique graphon with $d(H_i,W)=d_i$.
Finitely forcible graphons appear in many different settings, one of which is in extremal combinatorics.
It is known that if a graphon is finitely forcible, then it is the unique graphon which minimizes a fixed finite linear combination of subgraph densities,
i.e.~finitely forcible graphons are extremal points of the space of all graphons.
The following conjecture~\cite[Conjecture 7]{bib-lovasz11+} claims that the converse is also true.
\begin{conjecture}
Let $H_1,\ldots,H_k$ be finite graphs and $\alpha_1,\ldots,\alpha_k$ reals.
There exists a finitely forcible graphon $W$ that minimizes the sum $\sum\limits_{i=1}^k \alpha_i d(H_i,W)$.
\end{conjecture}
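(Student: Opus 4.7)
Write $L(W) := \sum_{i=1}^k \alpha_i d(H_i, W)$ for the functional to be minimized, and let $\mathcal{M}$ denote the set of graphons (modulo weak isomorphism) that minimize $L$. The first thing to observe is that $\mathcal{M}$ is nonempty and compact: by compactness of the graphon space under the cut metric and continuity of each $d(H_i,\cdot)$, the functional $L$ attains its minimum and $\mathcal{M}$ is closed. The goal is to exhibit a single element of $\mathcal{M}$ that is \emph{uniquely} determined by finitely many subgraph density equations, since such an element is then finitely forcible by definition.

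The approach I would try is iterative refinement by densities. Starting from $\mathcal{M}_0 := \mathcal{M}$, I would construct a decreasing chain
\[ \mathcal{M}_0 \supseteq \mathcal{M}_1 \supseteq \mathcal{M}_2 \supseteq \cdots, \qquad \mathcal{M}_j := \mathcal{M}_{j-1} \cap \{W : d(F_j, W) = \beta_j\}, \]
where at each step $F_j$ is chosen so that $d(F_j,\cdot)$ is not constant on $\mathcal{M}_{j-1}$ (if such an $F_j$ exists) and $\beta_j$ is a value attained on $\mathcal{M}_{j-1}$. Each $\mathcal{M}_j$ is then nonempty, closed, and a proper subset of $\mathcal{M}_{j-1}$. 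If after finitely many steps some $\mathcal{M}_N$ is a singleton, its unique element minimizes $L$ by construction and is finitely forcible by the combined list of density constraints coming from the $H_i$'s and the $F_j$'s.

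The main obstacle is termination. The graphon space is genuinely infinite-dimensional, and there is no a priori reason why finitely many density constraints should collapse $\mathcal{M}$ to a point; one can easily imagine $\mathcal{M}$ containing a continuum of pairwise non-isomorphic graphons distinguishable only by an infinite sequence of densities. To beat this, a proof would likely need to exploit the structure of $\mathcal{M}$ as a ``variety'' cut out by polynomial functionals (the subgraph densities are polynomials in the entries of $W$), and argue that any nonempty such variety contains a point that is genuinely finitely forcible. Since it is already known that every finitely forcible graphon is an extreme point of the graphon cone, a natural direction is to search among exposed points of $\mathcal{M}$ with respect to auxiliary density functionals, hoping that such exposed points inherit finite forcibility.

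A softer alternative would be approximation: pick finitely forcible graphons $W_n$ with $L(W_n) \to \min L$ and pass to a cut-limit $W_\infty \in \mathcal{M}$. This only produces a minimizer in the closure of finitely forcible graphons, and since finite forcibility is not preserved under cut-limits, substantial additional work is needed to replace $W_\infty$ by a genuinely finitely forcible minimizer. Given that the present paper constructs a finitely forcible graphon whose weak regular partitions must have nearly exponentially many parts, any proof must accommodate the possibility that the target minimizer is itself structurally intricate; this makes the conjecture both plausible and, in my view, technically the hardest step one would face.
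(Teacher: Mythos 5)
The statement you were asked to prove is not a theorem of this paper; it is Conjecture~7 of Lov\'asz and Szegedy, quoted here as an open problem that the paper neither proves nor attempts to prove. (The paper's actual contributions are the \svejk\ graphon construction and Theorem~\ref{thm-main}; the conjectures in the introduction serve as motivation.) So there is no ``paper's own proof'' to compare against, and any purported proof of this statement would be a new result settling a well-known open question.

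Your write-up is, to your credit, candid about this: you lay out two plausible strategies (iterative refinement by subgraph-density constraints, and approximation by finitely forcible graphons followed by a cut-limit), and you correctly identify the fatal gaps in each. The iterative-refinement chain has no termination guarantee --- indeed the whole point of the conjecture is that it is unclear why finitely many polynomial density equations should ever isolate a single graphon in $\mathcal{M}$ --- and the approximation route stalls because finite forcibility is not a closed property under the cut metric, which you also note. Neither sketch closes the gap, and you do not claim it does. The honest verdict is that this remains a conjecture: the paper cites it as such, and your proposal, while a reasonable survey of the landscape, does not constitute a proof and should not be presented as one. If you wish to connect it to the paper's actual content, the relevant remark is that Theorem~\ref{thm-main} shows finitely forcible graphons can be much wilder than Lov\'asz and Szegedy's Conjectures~\ref{conj-compact} and~\ref{conj-dimension} anticipated, which makes the extremal-point conjecture more plausible (complexity is no obstruction) but provides no mechanism for proving it.
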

\noindent Finitely forcible graphons are also related to quasirandomness in graphs
as studied e.g.~by Chung, Graham and Wilson~\cite{bib-chung89+}, R\"odl~\cite{bib-rodl}, and Thomason~\cite{bib-thomason, bib-thomason2}.
In the language of graph limits, results on quasirandom graphs state that every constant graphon is finitely forcible.
A generalization of this statement was proven by Lov\'asz and S\'os~\cite{bib-lovasz08+}:
every step graphon (i.e.~a multipartite graphon with a finite number of parts and uniform edge densities between its parts) is finitely forcible.

In~\cite{bib-lovasz11+},
Lov\'asz and Szegedy carried out a more systematic study of finitely forcible graphons.
The examples of finitely forcible graphons that they constructed led to a belief that
finitely forcible graphons must have a simple structure.
To formalize this, they introduced the (topological) space $T(W)$ of typical vertices of a graphon $W$ and
conjectured the following~\cite[Conjectures~9 and 10]{bib-lovasz11+}.
\begin{conjecture}
\label{conj-compact}
The space of typical vertices of every finitely forcible graphon is compact.
\end{conjecture}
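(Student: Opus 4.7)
The plan is to refute Conjecture~\ref{conj-compact} by constructing a finitely forcible graphon $W$ whose space $T(W)$ of typical vertices is not compact. Since the abstract announces a finitely forcible graphon with extremely complex weak $\varepsilon$-regular partitions, it is natural to expect that the same (or a closely related) construction will simultaneously witness failure of compactness, because a graphon with compact $T(W)$ would admit a structural description bounding the number of parts in a weak $\varepsilon$-regular partition polynomially in $\varepsilon^{-1}$, contradicting the announced lower bound.

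First I would design a candidate $W$ with a hierarchical, recursively self-similar structure: the unit interval is partitioned into infinitely many tiers, each further subdivided into blocks, arranged so that vertices in different tiers have neighbourhood profiles at $L^1$-distance bounded away from zero. This immediately guarantees that $T(W)$ contains an infinite $\delta$-separated set for some $\delta>0$, and hence fails to be compact; the target of the construction is therefore fixed before the forcing question is addressed.

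Next I would identify a finite list of subgraph density equations that pin down $W$. The standard strategy is to embed an internal coordinate system into $W$---a pair of auxiliary structures whose joint densities with small gadget graphs encode addresses for every block at every level of the recursion. A finite list of density constraints is then chosen so as to force (i)~the existence of this coordinate system in any candidate graphon satisfying the constraints, (ii)~the correct edge densities between the coordinate blocks at the top tier, and (iii)~a self-similarity identity that propagates the earlier constraints recursively down every tier.

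The main obstacle will be the rigidity step, i.e.~proving that any graphon satisfying the finite list of density constraints must agree with $W$ up to weak isomorphism. This is delicate because infinitely many structural constraints---one per tier of the recursion---have to be packaged into a single finite system, typically by translating the recursive definition of $W$ into self-referential density identities that implicitly apply at every scale. Once rigidity is established, non-compactness of $T(W)$ is immediate from the hierarchical construction, and the quantitative lower bound on weak regular partitions advertised in the abstract can be extracted by counting how many tiers must be resolved in order to approximate $W$ at scale $\varepsilon$.
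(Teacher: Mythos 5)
The statement you are addressing is not a result proved in the paper: it is Conjecture~\ref{conj-compact}, attributed to Lov\'asz and Szegedy, and the paper records immediately after stating it that it was already \emph{disproved} in~\cite{bib-inf,bib-comp}. You have therefore correctly oriented yourself toward a refutation rather than a proof. However, the paper itself does not refute Conjecture~\ref{conj-compact}; its construction (the \svejk graphon) is aimed at a quantitative strengthening of the negation of Conjecture~\ref{conj-dimension}, and whether $T(W)$ is compact for the \svejk graphon is not addressed anywhere in the text.

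There is also a genuine logical gap in your reduction. You assert that a graphon with compact $T(W)$ would admit weak $\varepsilon$-regular partitions with a number of parts polynomial in $\varepsilon^{-1}$, so that the lower bound in Theorem~\ref{thm-main} would force non-compactness. But the paper derives the polynomial bound only from \emph{finite Minkowski dimension} of $T(W)$ (Conjecture~\ref{conj-dimension}), not from compactness: a compact metric space can have infinite Minkowski dimension, so compactness of $T(W)$ by itself places no polynomial constraint on the number of parts in a weak regular partition. Consequently the exponentially many parts required for the \svejk graphon do not refute Conjecture~\ref{conj-compact}, and your planned route from the announced lower bound to non-compactness does not close. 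As a high-level blueprint, the recursive hierarchy together with an internal coordinate system forced via finitely many decorated density constraints is indeed the style of argument used in the paper and in~\cite{bib-inf,bib-comp}, but every hard step---the explicit construction, the rigidity proof, and in particular a direct verification that $T(W)$ fails to be compact---is left unexecuted, so this cannot be compared to a proof in the paper because no such proof exists there.
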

\begin{conjecture}
\label{conj-dimension}
The space of typical vertices of every finitely forcible graphon has a finite dimension.
\end{conjecture}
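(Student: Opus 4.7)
The plan is to leverage finite forcibility as a rigidity condition in order to produce a finite-dimensional representation of the typical vertex space $T(W)$. Recall that $T(W)$ is obtained by identifying vertices $x,y\in[0,1]$ with $W(x,\cdot)=W(y,\cdot)$ almost everywhere and then completing the resulting quotient with respect to a natural metric inherited from the $L^2$ structure. My goal would be to embed $T(W)$ isometrically into a finite-dimensional subspace of $L^2([0,1])$, which immediately bounds its dimension.

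My approach would go through the spectral decomposition of $W$ viewed as a symmetric Hilbert--Schmidt kernel. First, write $W(x,y)=\sum_i \lambda_i \varphi_i(x)\varphi_i(y)$, and encode each prescribed density $d(H_i,W)=d_i$ as a polynomial identity in the eigenvalues $\lambda_i$ together with joint moments of the eigenfunctions $\varphi_i$; this turns finite forcibility into the statement that a certain infinite system of polynomial equations has $W$ as its only solution. Second, argue that only finitely many eigenvalues can contribute nontrivially: otherwise, for large $N$ one should be able to perturb $W$ within the tail of its spectral expansion (for example via continuous families of orthonormal rotations of the corresponding eigenspaces) while preserving every $d(H_i,W)$, violating uniqueness. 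Third, conclude that the map $x\mapsto W(x,\cdot)$ takes its values in the finite-dimensional span of the significant eigenfunctions, so that $T(W)$ sits inside a Euclidean space of bounded dimension.

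The main obstacle is the second step. Subgraph densities depend on $W$ in a highly nonlinear way, and the uniqueness condition defining finite forcibility is only qualitative, so there is no a priori reason why it should pin down the spectrum in a finite way. Coordinated perturbations of infinitely many eigenvalues could in principle cancel in every $d(H_i,\cdot)$ simultaneously, and ruling this out would require a delicate analysis of the action of density functionals on the orbit of $W$ under measure-preserving rearrangements of $[0,1]$. The construction announced in the abstract of this paper is in fact a sharp warning that finitely forcible graphons can carry substantial small-scale structure, so any attack along the above lines must be significantly more subtle than a naive rank argument and may need a notion of \emph{combinatorial dimension} tailored to the forcing graphs $H_1,\ldots,H_k$ rather than a purely spectral invariant.
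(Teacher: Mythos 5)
You are attempting to prove a statement that the paper explicitly records as \emph{false}. Conjecture~\ref{conj-dimension} is quoted from Lov\'asz and Szegedy as a conjecture, and the introduction immediately afterwards states that ``Both conjectures were disproved through counterexample constructions in~\cite{bib-inf,bib-comp}.'' The paper has no proof of this statement because there is none; instead, the paper's main result (Theorem~\ref{thm-main}) constructs a finitely forcible graphon whose weak $\varepsilon$-regular partitions require nearly $2^{\Omega(\varepsilon^{-2})}$ parts. As the authors note, if Conjecture~\ref{conj-dimension} were true, every finitely forcible graphon would admit weak $\varepsilon$-regular partitions with only polynomially many parts in $\varepsilon^{-1}$. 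So the theorem you are reading gives a strong quantitative refutation of the very claim you set out to prove. You do sense trouble in your final paragraph, but you present it as an obstacle to be overcome rather than what it actually is: a counterexample.

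Beyond the global problem, the second step of your outline is independently wrong even as a heuristic. Finite forcibility does not imply finite spectral rank: the half-graphon $W_{\triangle}(x,y)=\mathbf{1}[x+y\ge 1]$, used in Subsection~\ref{sub-coordinate} of this paper, is finitely forcible~\cite{bib-diaconis09+,bib-lovasz11+} yet has infinitely many nonzero eigenvalues, so the map $x\mapsto W(x,\cdot)$ certainly does not land in a finite-dimensional subspace of $L^2$. There is also a conflation of notions of dimension: the conjecture concerns the (Minkowski-type) dimension of $T(W)$ under its intrinsic metric, which is not the rank of $W$. The half-graphon has infinite rank but a one-dimensional typical vertex space, and conversely the graphons constructed in~\cite{bib-inf,bib-comp} show that $T(W)$ can fail to be finite-dimensional or even compact for finitely forcible $W$. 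Any correct account here must \emph{disprove} the conjecture, and the present paper does so in sharpened form.
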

\noindent Both conjectures were disproved through counterexample constructions in~\cite{bib-inf,bib-comp}.

Conjecture~\ref{conj-dimension} is a starting point of our work.
Analogously to weak regularity of graphs,
every graphon has a weak $\varepsilon$-regular partition with at most $2^{O\left(\varepsilon^{-2}\right)}$ parts.
(See Section~\ref{sec:weak} for the necessary definitions.)
If the space of typical vertices of a graphon is equipped with an appropriate metric,
then its Minkowski dimension is linked to the number of parts in its weak regular partitions.
In particular, if its Minkowski dimension is $d$,
then the graphon has weak $\varepsilon$-regular partitions with $O\left(\varepsilon^{-d}\right)$ parts.
Consequently, if Conjecture~\ref{conj-dimension} were true,
the number of parts of a weak $\varepsilon$-regular partitions of a finitely forcible graphon would be bounded by a polynomial of $\varepsilon^{-1}$.
The number of parts in weak $\varepsilon$-regular partitions of a graphon constructed in~\cite{bib-inf} as a counterexample to Conjecture~\ref{conj-dimension}
is $2^{\Theta(\log^2\varepsilon^{-1})}$, which is superpolynomial in $\varepsilon^{-1}$,
but is much smaller than the general upper bound of $2^{O\left(\varepsilon^{-2}\right)}$.
We construct a finitely forcible graphon almost matching the upper bound.
\begin{theorem}
\label{thm-main}
There exist a finitely forcible graphon $W$ and positive reals $\varepsilon_i$ tending to $0$
such that every weak $\varepsilon_i$-regular partition of $W$
has at least $2^{\Omega\left(\varepsilon_i^{-2}/2^{5\log^*\varepsilon_i^{-2}}\right)}$ parts.
\end{theorem}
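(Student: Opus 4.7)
The plan is to construct a finitely forcible graphon $W$ whose local structure encodes roughly $\varepsilon^{-2}/2^{5\log^*\varepsilon^{-2}}$ essentially independent bits of information about a typical vertex, and then to argue that no step-function approximation of $W$ within cut-norm error $\varepsilon$ can succeed without separating all such bit patterns into distinct parts of the partition. The task therefore breaks naturally into three parts: constructing $W$, verifying that $W$ is finitely forcible, and proving the lower bound on the number of parts.

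For the construction we build on the hierarchical coordinate-system approach from \cite{bib-inf} and \cite{bib-comp}. We partition $[0,1]$ into a constant number of blocks; one block $A$ hosts a recursively nested coordinate system assigning to each typical vertex $x \in A$ an infinite binary address, a second block $B$ plays the role of an index of bit positions, and the bipartite density $W(x,y)$ for $x \in A$, $y \in B$ encodes the $y$-th bit of $x$'s signature. Auxiliary blocks enforce the linear order on coordinates and the consistency of the encoding. The crucial new ingredient, responsible for the improved exponent over the $2^{\Theta(\log^2\varepsilon^{-1})}$ bound of \cite{bib-inf}, is to iterate the coordinate construction through $\log^{\!*}\varepsilon^{-1}$ scales: each scale stores the addresses used at the next, so that only a $2^{5\log^*\varepsilon^{-2}}$ factor of ``overhead'' is spent on maintaining the recursion and the number of effectively independent bits stored in an $\varepsilon$-neighborhood approaches $\varepsilon^{-2}$.

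To show finite forcibility we follow the decorated-constraint strategy developed in \cite{bib-lovasz11+,bib-inf,bib-comp}. Finitely many subgraph densities serve as gadgets forcing the block structure, the linear order on coordinates, the correctness of each individual bit, and the invariance of the recursion from one scale to the next. This step is the main technical obstacle: the iterated compression that produces the improved exponent requires pinning down a genuinely self-similar structure using only a constant number of constraints, so the gadgets at different scales must interact in a tightly controlled way without forcing extra rigidity that destroys the density of signatures needed for the lower bound.

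For the lower bound, fix a weak $\varepsilon$-regular partition $\mathcal{P}$ of $W$ with $k$ parts and let $W_{\mathcal{P}}$ be its step-function approximation, so that $\|W - W_{\mathcal{P}}\|_\square < \varepsilon$. From the construction we select a set $F \subseteq A$ of $N = 2^{\Omega(\varepsilon^{-2}/2^{5\log^*\varepsilon^{-2}})}$ typical vertices whose neighborhood profiles $y \mapsto W(x,y)$ on $B$ are pairwise separated by more than $c\varepsilon$ in cut distance. A standard averaging argument shows that two vertices lying in the same part of $\mathcal{P}$ must have cut-close neighborhoods; hence the elements of $F$ lie in $N$ distinct parts, yielding $k \ge N$. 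The theorem then follows by taking a sequence $\varepsilon_i \to 0$ through scales aligned with the recursion levels of $W$, so that the selection of $F$ and the separation estimate are simultaneously sharp.
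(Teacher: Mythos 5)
Your lower-bound argument has a fatal gap. You claim that for a weak $\varepsilon$-regular partition $\mathcal P$, ``two vertices lying in the same part of $\mathcal P$ must have cut-close neighborhoods,'' and then conclude that vertices with pairwise-separated neighborhood profiles must lie in distinct parts. This is false: weak (Frieze--Kannan) regularity is an aggregate cut-norm condition on the partition, not a pointwise homogeneity condition on the parts. Consider the graphon $W_{\CF}^m$ from Section~\ref{sec:weak} itself with the trivial one-part partition. By the Cauchy--Schwarz/Bessel computation, $\|W_{\CF}^m - \tfrac12\|_\square = O(m^{-1/2})$, so the trivial partition is weak $\varepsilon$-regular whenever $\varepsilon \gtrsim m^{-1/2}$. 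Yet two typical vertices $x$, $x'$ of $W_{\CF}^m$ have row functions whose difference is $\scalar{u(x)-u(x')}{u(\cdot)}/(4m^{1/2})$, and since $u(x)$ and $u(x')$ differ in roughly $m/2$ coordinates, these row functions are at cut distance $\Theta(1)$ from one another. So the single part contains continuum-many vertices with row functions separated by a constant, while being $\varepsilon$-regular with $\varepsilon \to 0$. Your ``standard averaging argument'' does not exist: if $N$ profiles pairwise far apart forced $N$ parts, then $W_{\CF}^m$ would need roughly $2^m$ parts for every small $\varepsilon$, contradicting the $2^{O(\varepsilon^{-2})}$ upper bound as soon as $\varepsilon \gg m^{-1/2}$.

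The paper's lower bound (Theorem~\ref{thm-conlon-fox}) is genuinely different and harder: it exploits the algebraic structure of $W_{\CF}^m$ by finding a coordinate $i_0$ and sets $A^+$, $A^-$, $B=V_{i_0}^-$ such that $A^\pm$ have the same intersection measure with every part $U_t$, forcing the partition-predicted densities $d(A^+,B)$ and $d(A^-,B)$ to agree up to $2\varepsilon$, while the actual densities differ by $\gtrsim m^{-1/2}$. No diversity-of-row-functions counting is involved. Correspondingly, the construction in the paper is not a generic ``hierarchical bit-address'' graphon; it embeds a scaled copy of $W_{\CF}^{t(k-1)}$ on the $k$-th segment of the part $E$ (the tower scaling being what produces the $2^{5\log^*\varepsilon^{-2}}$ factor), and the bulk of the finite forcibility argument in Section~\ref{sec-forcing} is devoted to forcing exactly these Conlon--Fox subgraphons via auxiliary parts implementing segments, a tower, binary expansions, a linear map and a dot product. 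To repair your proposal you would need to (i) replace the bit-encoding gadget with an embedded pseudorandom subgraphon whose lower bound you can prove by the Conlon--Fox separating-sets method, and (ii) give an actual forcing scheme for that pseudorandom structure --- both of which are where the real content of the theorem lives.
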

\noindent As pointed out to us by Jacob Fox,
there is no graphon (finitely forcible or not) matching the upper bound $2^{O\left(\varepsilon^{-2}\right)}$
for infinitely many values of $\varepsilon$ tending to $0$.
In light of this, Theorem~\ref{thm-main} is almost the best possible.
\begin{proposition}
\label{prop-lower-graphon}
There exist no graphon $W$, a positive real $c$ and positive reals $\varepsilon_i$ tending to $0$ such that
every weak $\varepsilon_i$-regular partition of $W$ has at least $2^{c\varepsilon_i^{-2}}$ parts.
\end{proposition}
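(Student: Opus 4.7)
The plan is to revisit the standard energy-increment proof of the weak regularity lemma of Frieze and Kannan, the only modification being that the iteration is started from a well-chosen partition rather than from the trivial one. Fix a graphon $W$ and a positive real $c$; I aim to show that for every sufficiently small $\varepsilon>0$, the graphon $W$ admits a weak $\varepsilon$-regular partition with fewer than $2^{c\varepsilon^{-2}}$ parts. This immediately rules out the existence of any sequence $\varepsilon_i\to 0$ of the kind forbidden in the statement, completing the proof.

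The first ingredient I would use is density of step functions in $L^2$: since the finite linear span of indicators $\mathbf{1}_{A\times B}$ for measurable $A,B\subseteq[0,1]$ is dense in $L^2([0,1]^2)$, there exists a finite partition $\mathcal{P}_0$ of $[0,1]$, with some number $N=N(W,c)$ of parts, for which
$$\|W\|_2^2-\|W_{\mathcal{P}_0}\|_2^2 \;=\; \|W-W_{\mathcal{P}_0}\|_2^2 \;<\; c/4,$$
where $W_{\mathcal{P}_0}$ denotes the step graphon obtained by averaging $W$ on each block $P_i\times P_j$. Crucially, $N$ depends on $W$ and $c$ only, not on $\varepsilon$.

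Next I would run the usual energy-increment iteration on $\mathcal{P}_0$. If the current partition $\mathcal{P}$ fails to be weak $\varepsilon$-regular, then there exist $S,T\subseteq[0,1]$ with $\left|\int_{S\times T}(W-W_\mathcal{P})\right|>\varepsilon$; refining $\mathcal{P}$ by the cuts $\{S,S^c\}$ and $\{T,T^c\}$ at most quadruples the number of parts while increasing $\|W_\mathcal{P}\|_2^2$ by at least $\varepsilon^2$. Since $\|W_\mathcal{P}\|_2^2$ can never exceed $\|W\|_2^2$, the process must terminate after at most $\bigl(\|W\|_2^2-\|W_{\mathcal{P}_0}\|_2^2\bigr)/\varepsilon^2<c/(4\varepsilon^2)$ refinements, yielding a weak $\varepsilon$-regular partition with at most $N\cdot 4^{c/(4\varepsilon^2)}=N\cdot 2^{c/(2\varepsilon^2)}$ parts. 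Once $\varepsilon$ is small enough that $\log_2 N<c\varepsilon^{-2}/2$, this bound is strictly smaller than $2^{c\varepsilon^{-2}}$, which is exactly what is needed.

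The hard part is not expected to be hard, as both building blocks are classical. The only observation worth isolating is that it is the $L^2$-defect $\|W\|_2^2-\|W_{\mathcal{P}_0}\|_2^2$, rather than any cut-norm quantity, that controls the number of refinement steps, and that this defect can be driven below any prescribed constant by a partition whose size depends only on $W$ and that constant; once this is in hand, everything reduces to routine bookkeeping on top of the standard Frieze--Kannan argument.
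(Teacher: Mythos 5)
Your argument is correct, and it takes a genuinely different route from the paper's proof sketch. The paper argues by contradiction using a single long chain of Frieze--Kannan refinements started from the trivial partition: assuming the claimed sequence $\varepsilon_i$ exists (with $\varepsilon_{i+1}\le\varepsilon_i/2$), each stage where the running partition first becomes $\varepsilon_i$-regular must have used at least $c\varepsilon_i^{-2}$ refinement steps, and a telescoping sum shows that after $\lceil 4/(3c)\rceil+1$ stages the mean-square density would exceed $1$, which is impossible. You instead prove the contrapositive directly: for each small $\varepsilon$ you \emph{restart} the energy-increment iteration from a well-chosen initial partition $\mathcal{P}_0$ of size $N(W,c)$ that already captures all but $c/4$ of the $L^2$-energy of $W$ (this uses $L^2$-density of step functions and the Pythagorean/conditional-expectation identity $\|W\|_2^2-\|W_{\mathcal{P}_0}\|_2^2=\|W-W_{\mathcal{P}_0}\|_2^2$, which the paper does not invoke), so that at most $c/(4\varepsilon^2)$ refinement steps remain. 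Both proofs hinge on the same increment lemma, but the extra ingredient you isolate --- that a partition of size independent of $\varepsilon$ can absorb nearly all of the energy budget --- lets you avoid the telescoping trick and yields the cleaner quantitative statement that \emph{every} sufficiently small $\varepsilon$ admits a weak $\varepsilon$-regular partition with fewer than $2^{c\varepsilon^{-2}}$ parts, rather than merely deriving a contradiction from an assumed bad sequence. Your numerics check out: $N\cdot 4^{c/(4\varepsilon^2)}=N\cdot 2^{c/(2\varepsilon^2)}<2^{c\varepsilon^{-2}}$ once $\log_2 N<c/(2\varepsilon^2)$.
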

\noindent The proof of this proposition is sketched at the end of Section~\ref{sec:weak}.

We will refer to the graphon $W$ from Theorem~\ref{thm-main} as the \svejk graphon. \svejk is the name of a famous (and fictitious) brave Czech soldier and, more importantly for us, it is the name of the restaurant where we usually ate lunch during our work on this subject
while three of us were visiting the University of West Bohemia in Pilsen.

\section{Graph limits}
\label{sec:limits}

We now introduce notions related to graphons and convergent sequences of graphs.
The {\em density} of a graph $H$ in $G$, which is denoted by $d(H,G)$,
is the probability that $|H|$ randomly chosen vertices of $G$ 
induce a subgraph isomorphic to $H$, where $|H|$ is the order (the number of vertices) of $H$.
A sequence of graphs $(G_n)_{n\in\NN}$ with the number of their vertices tending to infinity
is {\em convergent} if the sequence $d(H,G_n)$ converges for every graph $H$.
Note that if $G_n$ has $o(|G_n|^2)$ edges,
then the sequence $(G_n)_{n\in\NN}$ is convergent for trivial reasons.
Hence, this notion of graph convergence is of interest for sequences of dense graphs,
i.e.~graphs with $\Omega(|G_n|^2)$ edges.

A convergent sequence of dense graphs can be represented by an analytic object called a graphon.
A {\em graphon} $W$ is a measurable function from $[0,1]^2$ to $[0,1]$ that is symmetric,
i.e. it holds that $W(x,y)=W(y,x)$ for every $x,y\in [0,1]$.
The points in $[0,1]$ are often referred to as the {\em vertices} of the graphon $W$.

If $W$ is a graphon, then a {\em $W$-random graph} of order $k$ is obtained by sampling $k$ random points $x_1,\ldots,x_k\in [0,1]$
uniformly and independently, and joining the $i$-th and the $j$-th vertex by an edge with probability $W(x_i,x_j)$.
The {\em density} of a graph $H$ in a graphon $W$ is the probability that a $W$-random graph of order $|H|$ is isomorphic to $H$.
If $(G_n)_{n\in\NN}$ is a convergent sequence of graphs,
then there exists a graphon $W$ such that $d(H,W)=\lim\limits_{n\to\infty} d(H,G_n)$ for every graph $H$~\cite{bib-lovasz06+}.
This graphon can be viewed as the limit of the sequence $(G_n)_{n\in\NN}$.
On the other hand,
a sequence of $W$-random graphs of increasing orders is convergent with probability one and
its limit is the graphon $W$.

Two graphons $W_1$ and $W_2$ are {\em weakly isomorphic}
if there exist measure preserving maps $\varphi_1$ and $\varphi_2$ from $[0,1]$ to $[0,1]$ such that
$W_1(\varphi_1(x),\varphi_1(y))=W_2(\varphi_2(x),\varphi_2(y))$ for almost every pair $(x,y)\in [0,1]^2$.
If two graphons $W_1$ and $W_2$ are weakly isomorphic, then $d(H,W_1)=d(H,W_2)$ for every graph $H$.
The converse is also true~\cite{bib-borgs10+}:
if two graphons $W_1$ and $W_2$ satisfy that $d(H,W_1)=d(H,W_2)$ for every graph $H$,
then $W_1$ and $W_2$ are weakly isomorphic.
Hence, the limit of a convergent sequence of graphs is unique up to weak isomorphism.
We finish with giving a formal definition of a finitely forcible graphon:
a graphon $W$ is {\em finitely forcible} if there exist graphs $H_1,\ldots,H_k$ such that
if a graphon $W'$ satisfies that $d(H_i,W')=d(H_i,W)$ for $i=1,\ldots,k$, then $d(H,W')=d(H,W)$ for every graph $H$.

\section{Weak regular partitions}
\label{sec:weak}

In this section, we recall some basic concepts related to weak regularity for graphs and graphons and
cast the lower bound construction of Conlon and Fox from~\cite{bib-conlon12+} in the language of graphons.
Since we do not use any other type of regularity partition,
we will just say ``regular'' instead of ``weak regular'' in what follows.

We start with defining the notion for graphs.
If $G$ is a graph and $A$ and $B$ two subsets of its vertices,
let $e(A,B)$ be the number of edges $uv$ such that $u\in A$ and $v\in B$.
A partition of a vertex set $V(G)$ of a graph $G$ into subsets $V_1,\ldots,V_k$
is said to be {\em $\varepsilon$-regular}
if it holds that
$$\left|e(A,B)-\sum_{i,j\in [k]}\frac{e(V_i,V_j)}{|V_i||V_j|}|V_i\cap A||V_j\cap B|\right|\le\varepsilon |V(G)|^2$$
for every two subsets $A$ and $B$ of $V(G)$.
It is known that for every $\varepsilon>0$,
there exists $k_0\le 2^{O\left(\varepsilon^{-2}\right)}$ (which depends on $\varepsilon$ only) such that
every graph has an $\varepsilon$-regular partition with at most $k_0$ parts~\cite{bib-frieze99+}.
This dependence of $k_0$ on $\varepsilon$ is best possible up to a constant factor in the exponent as shown by Conlon and Fox~\cite{bib-conlon12+}.

We now define the analogous notion for graphons.
Let $W:[0,1]^2\to [0,1]$ be a graphon. If $A$ and $B$ are two measurable subsets of $[0,1]$,
then the {\em density $d_W(A,B)$ between $A$ and $B$} is defined to be
$$d_W(A,B)=\int\limits_{A\times B} W(x,y)\dd x\dd y\;\mbox{.}$$
We will omit $W$ in the subscript if the graphon $W$ is clear from the context.
Note that it always holds that $d(A,B)\le |A||B|$ where $|X|$ is the measure of a set $X$.
We would like to mention that
the density between $A$ and $B$ is often defined in a normalized way, i.e.~it is defined to be $\frac{d(A,B)}{|A||B|}$,
but this is not the case in this paper.

A partition of $[0,1]$ into measurable non-null sets $U_1,\ldots,U_k$ is said to be {\em $\varepsilon$-regular}
if it holds that
$$\left|d(A,B)-\sum_{i,j\in [k]}\frac{d(U_i,U_j)}{|U_i||U_j|}|U_i\cap A||U_j\cap B|\right|\le\varepsilon$$
for every two measurable subsets $A$ and $B$ of $[0,1]$.
The upper bound proof translates directly from graphs to graphons and
so we get that for every $\varepsilon$,
there exists $k_0\le 2^{O\left(\varepsilon^{-2}\right)}$ such that
every graphon has an $\varepsilon$-regular partition with at most $k_0$ parts.
Likewise,
the example of Conlon and Fox from~\cite{bib-conlon12+} can be used to obtain a step-graphon $W_{\varepsilon}$ such that
every $\varepsilon$-regular partition of $W_{\varepsilon}$ has at least $2^{\Omega\left(\varepsilon^{-2}\right)}$ parts.
However, the construction is probabilistic and the description of $W_{\varepsilon}$ is thus not explicit.
Based on this construction, we will define an explicit graphon $W_{\CF}^m$
which has similar properties as $W_{\varepsilon}$ for $\varepsilon\approx m^{-1/2}$.
In fact, a $W_{\CF}^m$-random graph of order $2^{\alpha m}$ for some $\alpha$ close to $0$
is the graph constructed by Conlon and Fox in~\cite{bib-conlon12+}.

Fix an integer $m$. The graphon $W_{\CF}^m$ is a step graphon that consists of $2^{m}$ parts of equal size.
Each of the parts is associated with a vector $u\in\{-1,+1\}^{m}$.
The part of the graphon $W_{\CF}^m$ corresponding to vectors $u$ and $u'$
is constantly equal to $\trunc{\frac{1}{2}+\frac{\scalar{u}{u'}}{4m^{1/2}}}$
where $\trunc{x}$ is equal to $x$ if $x\in[0,1]$, it is equal to $0$ if $x<0$ and to $1$ if $x>1$.
In other words, the operator $\trunc{\cdot}$ replaces values smaller than $0$ or larger than $1$ with $0$ and $1$, respectively.
Observe that $d([0,1],[0,1])=1/2$ by symmetry.
Using the Chernoff bound, one can show that the measure of the points $(x,y)$ with $0<W(x,y)<1$
is at least $1-2e^{-2}>1/2$.

It would be possible to relate the proof presented in~\cite{bib-conlon12+} to arguments on regular partitions of graphons.
However, the probabilistic nature of the construction would make this technical and obfuscate some simple ideas.
Because of this, and to keep the paper self-contained,
we decided to present a direct proof following the lines of the reasoning given in~\cite{bib-conlon12+}.

\begin{theorem}
\label{thm-conlon-fox}
If $m\ge 25$ and $\varepsilon<\frac{1}{2^{14}m^{1/2}}$,
then every $\varepsilon$-regular partition of the graphon $W_{\CF}^m$ has at least $2^{m/4}$ parts.
\end{theorem}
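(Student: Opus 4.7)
My plan is a proof by contradiction in the spirit of Conlon and Fox. Assume that $\mathcal{P}=\{U_1,\ldots,U_k\}$ is an $\varepsilon$-regular partition of $W_{\CF}^m$ with $k<2^{m/4}$ parts; I want to exhibit measurable sets violating the definition. The natural test sets come from the cube structure: for each $i\in[m]$ let $A_i=\bigcup_{u:u_i=+1}I_u$, so that $|A_i|=1/2$. A direct computation using the identity $\sum_{u:u_i=+1}u_j$ is $2^{m-1}$ if $i=j$ and $0$ otherwise, together with a Chernoff bound to control the measure of the block pairs $(u,u')$ with $|\scalar{u}{u'}|\ge 2m^{1/2}$ (which is the only place where the $\trunc{\cdot}$ operator has any effect), gives
\[
d(A_i,A_i)=\tfrac{1}{8}+\tfrac{1}{16m^{1/2}}+\eta_i,
\]
where $|\eta_i|$ is negligible compared to the slack in the hypothesis $\varepsilon<1/(2^{14}m^{1/2})$ when $m\ge 25$.

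To describe the stepped approximation, I encode each part by its ``first-moment vector'' $V_j\in[-|U_j|,|U_j|]^m$ with $(V_j)_i:=|U_j\cap A_i|-|U_j\setminus A_i|$, so that $|U_j\cap A_i|=(|U_j|+(V_j)_i)/2$. The same ansatz used above gives $d(U_j,U_{j'})\approx|U_j||U_{j'}|/2+\scalar{V_j}{V_{j'}}/(4m^{1/2})$ up to truncation error. Substituting these expressions into the $\varepsilon$-regularity inequality for $A=B=A_i$, and using $\sum_j(V_j)_i=0$ (a consequence of $|A_i|=|A_i^c|$) to kill the linear terms, reduces everything to the quadratic constraint
\[
\Bigl|\sum_{j,j'}c_{jj'}(V_j)_i(V_{j'})_i-\tfrac{1}{4m^{1/2}}\Bigr|\le 4\varepsilon+\eta
\]
for every $i\in[m]$, where $c_{jj'}:=d(U_j,U_{j'})/(|U_j||U_{j'}|)\in[0,1]$ and $\eta$ is a truncation error.

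The heart of the argument, and the main obstacle, is converting the resulting $m$ coordinatewise inequalities into the combinatorial bound $k\ge 2^{m/4}$. Summing over $i$ gives only a global bound of the form $\sum_{j,j'}c_{jj'}\scalar{V_j}{V_{j'}}\gtrsim m^{1/2}$, which on its own is too weak: the trivial upper bound $m\sum_{j,j'}c_{jj'}|U_j||U_{j'}|=m/2$ is much larger, so the quadratic form is allowed to be diffusely spread across coordinates in a way compatible with small $k$. I plan to follow Conlon and Fox by using a second-moment / Paley--Zygmund argument on the per-coordinate inequalities to extract a subset $T\subseteq[m]$ of size $\Omega(m)$ of \emph{simultaneously resolved} coordinates, meaning that for a positive total measure of parts $U_j$ the ratio $(V_j)_i/|U_j|$ is bounded away from $0$ for every $i\in T$. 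Once such a $T$ is obtained, a pigeonhole argument on the map $j\mapsto(\operatorname{sign}(V_j)_i)_{i\in T}\in\{-1,+1\}^T$ forces $k\ge 2^{|T|}\ge 2^{m/4}$: for each sign pattern $\sigma\in\{-1,+1\}^T$, the blocks $I_u$ with $u|_T=\sigma$ carry total measure $2^{-|T|}$ and cannot all be absorbed into parts of a different sign pattern without destroying the quadratic lower bound. The delicate step is the extraction of $T$; the constants $1/(2^{14}m^{1/2})$ and $m/4$ in the statement arise from balancing the truncation budget, the regularity slack, and the Paley--Zygmund threshold against each other.
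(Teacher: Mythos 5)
Your proposal takes a genuinely different route from the paper, but the crucial step is not only ``delicate'' as you admit --- as sketched, it has a gap that I do not believe can be patched in the form you describe.

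The paper's proof does not extract a set $T$ of $\Omega(m)$ coordinates that are simultaneously resolved, nor does it pigeonhole on sign patterns $\{-1,+1\}^T$. It instead works per part: calling $U_t$ \emph{small} if $|U_t|\le 2^{-m/3}$ (so small parts have total measure $\le 1/2$ when $k<2^{m/4}$), it uses a Chernoff bound to show that every non-small part has at least $m/16$ ``useful'' coordinates $i$, meaning $\min\{|U_t\cap V_i^-|,|U_t\cap V_i^+|\}\ge |U_t|/64$. Averaging then yields a \emph{single} coordinate $i_0$ for which parts $U_t$ with $(i_0,t)$ useful have total measure $\ge 1/32$. The contradiction is obtained by constructing explicit test sets: $A^-\subseteq V_{i_0}^-$ and $A^+\subseteq V_{i_0}^+$ with $|A^-\cap U_t|=|A^+\cap U_t|$ for every $t$, and $B=V_{i_0}^-$. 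Because the two test sets have identical intersection measures with every part of the partition, $\varepsilon$-regularity forces $|d(A^-,B)-d(A^+,B)|\le 2\varepsilon<2^{-13}m^{-1/2}$, whereas the cube structure gives $d(A^-,B)-d(A^+,B)=(m^{-1/2}/2)|A^-||B|\ge 2^{-13}m^{-1/2}$. This is a clean, direct violation of the regularity definition via a pair of ``indistinguishable'' test sets.

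The gap in your proposal is the claimed extraction of a \emph{common} set $T$ of $\Omega(m)$ simultaneously resolved coordinates together with the subsequent pigeonhole. First, the per-part balance argument only yields that each non-small part has $\ge m/16$ balanced coordinates, but these coordinates can be completely different for different parts; there is no obvious common $T$ of size $\Omega(m)$, and even averaging only yields one good coordinate (as the paper shows), not $\Omega(m)$ of them. Second, even granting a set $T$ on which the bias $(V_j)_i/|U_j|$ is bounded away from $0$, the map $j\mapsto(\operatorname{sign}(V_j)_i)_{i\in T}$ does not concentrate each part on a single sign pattern of blocks: a small constant bias in each coordinate is entirely compatible with $U_j$ being spread over essentially all $2^{|T|}$ sign patterns, so ``the blocks with $u|_T=\sigma$ cannot all be absorbed into parts of a different sign pattern'' does not follow and, more importantly, does not produce a pair $(A,B)$ violating the $\varepsilon$-regularity inequality, which is what the theorem requires. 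Your reduction to per-coordinate quadratic constraints on the first-moment vectors $V_j$ is fine as bookkeeping, but the contradiction you need is an explicit failure of the regularity definition, and your sketch never produces one.
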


\begin{proof}
Fix an integer $m\ge 25$.
Let $V_i^-$ be the vertices of $W_{\CF}^m$ in the parts associated
with vectors $u$ whose $i$-th coordinate equals $-1$.
Similarly, $V_i^+$ are the vertices of $W_{\CF}^m$ in the parts associated
with vectors $u$ whose $i$-th coordinate equals $+1$.

Suppose that $W_{\CF}^m$ has an $\varepsilon$-regular partition $U_1,\ldots,U_k$
with $\varepsilon<2^{-14}m^{-1/2}$ and $k<2^{m/4}$.
We say that a part $U_t$ is {\em small} if $|U_t|\le 2^{-m/3}$.
Note that the sum of the measures of the small parts is at most $k\cdot 2^{-m/3}\le 1/2$.
For every $t\in [k]$, set
\begin{equation}
S_t=\sum_{i\in [m]}|U_t\cap V_i^-|\cdot|U_t\cap V_i^+|\;\mbox{.}
\label{eq-St}
\end{equation}
If $v\in\{-1,+1\}^{m}$, then the number of vectors $v'\in\{-1,+1\}^{m}$ such that
$v$ and $v'$ differ in at most $m/16$ coordinates is at most $2^{m-\frac{49}{128}m}\le 2^{2m/3-1}$
using the Chernoff bound and the fact that $m\ge 25$.
Hence, for every $v\in\{-1,+1\}^{m}$,
the measure of the vertices in the parts associated with vectors that differ from $v$ in at most $m/16$ coordinates is at most $2^{-m/3}/2$.
Consequently, if $U_t$ is not small, then
each vertex of $U_t$ contributes to the sum (\ref{eq-St}) by at least $\frac{m}{16}\left(|U_t|-2^{-m/3}/2\right)\ge m|U_t|/32$.
We conclude that $S_t\ge |U_t|^2m/32$ if $U_t$ is not small.

We say that the pair $(i,t)\in [m]\times [k]$ is {\em useful} if $\min\{|U_t\cap V_i^-|,|U_t\cap V_i^+|\}\ge |U_t|/64$.
Let $M_t$, $t\in [k]$, be the number of indices $i\in [m]$ such that the pair $(i,t)$ is useful.
Since each term in the sum (\ref{eq-St}) is at most $|U_t|^2/4$ and it is at most $|U_t|^2/64$ if $(i,t)$ is not useful,
it follows that $S_t\le |U_t|^2 (M_t/4+m/64)$.
We conclude that $M_t\ge m/16$ unless $U_t$ is small (recall that $S_t\ge |U_t|^2m/32$ if $U_t$ is not small).
Since the sum of the measures of the parts that are not small is at least $1/2$, we obtain that
\begin{equation}
\sum_{t\in [k]}M_t|U_t|\ge m/32\;\mbox{.}
\label{eq-Mt}
\end{equation}
In particular,
there exists $i_0\in [m]$ such that the sum of the measure of parts $U_t$ such that the pair $(i_0,t)$ is useful is at least $1/32$.
Fix such an index $i_0$ for the rest of the proof.

Let $A^-$ be any measurable subset of $V_{i_0}^-$ such that $|A^-\cap U_t|=|U_t|/64$ if $(i_0,t)$ is useful, and $|A^-\cap U_t|=0$ otherwise.
Similarly,
let $A^+$ be any measurable subset of $V_{i_0}^+$ such that $|A^+\cap U_t|=|U_t|/64$ if $(i_0,t)$ is useful, and $|A^+\cap U_t|=0$ otherwise.
Such sets $A^-$ and $A^+$ exist because $\min\{|U_t\cap V_{i_0}^-|,|U_t\cap V_{i_0}^+|\}\ge |U_t|/64$ for every $t$ such that $(i_0,t)$ is useful.
Note that the sets $A^-$ and $A^+$ have the same measure and
the choice of $i_0$ implies this measure is at least $1/2048=2^{-11}$.

Let $B=V_{i_0}^-$.
Since the partition $U_1,\ldots,U_k$ is $\varepsilon$-regular,
we get that
$$\left|d(A^+,B)-\sum_{i,j\in [k]}\frac{d(U_i,U_j)}{|U_i||U_j|}|U_i\cap A^+||U_j\cap B|\right|\le\varepsilon$$
and that
$$\left|d(A^-,B)-\sum_{i,j\in [k]}\frac{d(U_i,U_j)}{|U_i||U_j|}|U_i\cap A^-||U_j\cap B|\right|\le\varepsilon\mbox{.}$$
Since $|A^-\cap U_t|=|A^+\cap U_t|$ for every $t\in [k]$,
we infer that $|d(A^-,B)-d(A^+,B)|\le 2\varepsilon<2^{-13}m^{-1/2}$.
On the other hand, the choices of $A^-$, $A^+$ and $B$ imply that
$d(A^-,B)=(1/2+m^{-1/2}/4)|A^-||B|$ and $d(A^+,B)=(1/2-m^{-1/2}/4)|A^+||B|$.
In particular, it holds that
$$|d(A^-,B)-d(A^+,B)|=\frac{m^{-1/2}(|A^-|+|A^+|)|B|}{4}\ge 2^{-13}\cdot m^{-1/2}\;\mbox{.}$$
This contradicts the fact that $U_1,\ldots,U_k$ is an $\varepsilon$-regular partition of  $W_{\CF}^m$ 
with $\varepsilon<2^{-14}m^{-1/2}$ and $k<2^{m/4}$.
\end{proof}

At first sight, it might seem natural to consider the limit of the sequence of the graphons $W_{\CF}^m$, $m\in\NN$, as
a candidate for a graphon with no regular partitions with few parts.
It can be shown that the sequence of $W_{\CF}^m$, $m\in\NN$, is convergent,
however, its limit is (somewhat surprisingly) the graphon that maps every $(x,y)\in[0,1]^2$ to $1/2$,
which has an $\varepsilon$-regular partition with one part for every $\varepsilon$.

We finish this section with sketching a short argument explaining why Proposition~\ref{prop-lower-graphon} is true.
The proof of the weak regularity lemma in~\cite{bib-frieze99+} is based on iterative refinements of a partition of a graph(on),
at each step doubling the number of parts and increasing the ``mean square density'' by at least $\varepsilon^2$
until the partition becomes weakly $\varepsilon$-regular.
Suppose that there exists a graphon $W$ and $\varepsilon_i\to 0$ as in the proposition.
We can assume that $\varepsilon_{i+1}\le\varepsilon_i/2$ for every $i\in\NN$.
We start with a trivial partition into a single part and keep refining it until it becomes $\varepsilon_1$-regular;
let $k_1$ be the number of steps made. We then continue with refining until it becomes $\varepsilon_2$-regular and
let $k_2\ge k_1$ be the total number of steps made till this point.
We continue this procedure and define $k_i$, $i\ge 3$, in the analogous way.
Setting $k_0=0$, we conclude that the mean square density after $k_m$ steps is at least
$$\sum_{i=1}^{m} (k_i-k_{i-1})\varepsilon_i^2>\sum_{i=1}^{m} k_i \left(\varepsilon_i^2-\varepsilon_{i+1}^2\right)\ge \sum_{i=1}^{m}\frac{3}{4}k_i \varepsilon_i^2\;\mbox{.}$$
However, each $k_i$ must be at least $c\varepsilon_i^{-2}$ by the assumption of the proposition (otherwise,
$W$ would have a weak $\varepsilon_i$-regular partition with fewer than $2^{c\varepsilon_i^{-2}}$ parts).
Consequently, after $m=\left\lceil\frac{4}{3c}\right\rceil+1$ steps,
the mean square density exceeds one, which is impossible.

\section{Definition of the \svejk graphon}

We now define the \svejk graphon $W_S$.
We start with defining a tower function $t(n):\NN\to\NN$ as follows:
$$t(n)=\left\{\begin{array}{cl}
              1 & \mbox{if $n=0$, and}\\
	      2^{t(n-1)} & \mbox{otherwise.}
              \end{array}\right.$$
Note that $t(0)=1$, $t(1)=2$, $t(2)=2^2=4$, $t(3)=2^{2^2}=16$, $t(4)=2^{2^{2^2}}=65536$, etc.

The notation that we define next is summarized in Table~\ref{tab-brackets}.
For $x\in [0,1)$, we define $[x]_1$ to be the smallest integer $k$ such that $x<1-2^{-k}$.
In particular, $[x]_1=1$ iff $x\in [0,1/2)$, $[x]_1=2$ iff $x\in [1/2,3/4)$, $[x]_1=3$ iff $x\in [3/4,7/8)$, etc.
This allows us to view the interval $[0,1)$ as split into {\em segments} $[0,1/2)$, $[1/2,3/4)$, $[3/4,7/8)$, etc. and
$[x]_1$ is the index of the segment containing $x$ (numbered from one). We then define $\llbracket x\rrbracket_1$ to be $(x+2^{1-[x]_1}-1)\cdot 2^{[x]_1}$,
i.e.~$\llbracket x\rrbracket_1$ is the position of $x$ in the $[x]_1$-th segment if the $[x]_1$-th segment is scaled to the unit interval. 

\begin{table}[htbp]
\begin{center}
\begin{tabular}{|c|l|}
\hline
$[x]_1$ & index of the segment $[0,1/2)$, $[1/2,3/4)$, $[3/4,7/8)$, etc, containing $x$ \\
$[x]_2$ & index of the subsegment of the $[x]_1$-th segment containing $x$ (the segment is\\
 & divided into $t\left([x]_1\right)$ subsegments) \\
$[x]_3$ & index of the part of the $[x]_2$-th subsegment of the $[x]_1$-th segment containing\\
& $x$ (the subsegment is divided  into $t\left([x]_1\right)$ parts) \\
$[x]_{2,3}$ & $[x]_3 + t\left([x]_1\right)[x]_2$\\\hline
$\llbracket x\rrbracket_1$ & the position of $x$ in its segment scaled to be in $[0,1)$ \\
$\llbracket x\rrbracket_{1,i}^{01}$ & the $i$-th bit of the binary representation of $\llbracket x\rrbracket_1$\\
$\llbracket x\rrbracket_{1}^{\pm1}$ & a vector in $\{\pm1\}^{t\left([x]_1-1\right)}$ whose $i$-th coordinate is $2\llbracket x\rrbracket_{1,i}^{01}-1$\\
\hline
$[x]_{j,i}^{01}$ & the $i$-th bit of the binary representation of $[x]_j$\\ 
$[x]_{j}^{\pm1}$ & a vector in $\{\pm1\}^{t\left([x]_1-1\right)}$ whose $(i+1)$-th coordinate is $2[ x]_{j,i}^{01}-1$\\\hline
\end{tabular}
\end{center}
\caption{The notation used in the definition of the \svejk graphon.}
\label{tab-brackets}
\end{table}

Next, we let $[x]_2$ equal
$\left\lfloor \llbracket x\rrbracket_1\cdot t\left([x]_1\right)\right\rfloor$.
In other words, if the $[x]_1$-th segment of $[0,1)$ is divided into $t\left([x]_1\right)$ parts of the same length $2^{-[x]_1}/t\left([x]_1\right)$,
then $[x]_2$ is the index of the part containing $x$ if the parts are numbered from $0$.
We refer to these parts of the segments as {\em subsegments}. Analogously, we let $[x]_3$ be the index of the part containing $x$ when the $[x]_2$-th subsegment of the $[x]_1$-th segment is divided into $t\left([x]_1\right)$ parts of length $2^{-[x]_1}/t\left([x]_1\right)^2$, where the parts are numbered from $0$. Define $[x]_{2,3}$ to be $[x]_3 + t\left([x]_1\right)[x]_2$. Note that $[x]_{2,3}$ can also be viewed as the part containing $x$ when the $[x]_1$-th segment is divided into $t\left([x]_1\right)^2$ parts of length $2^{-[x]_1}/t\left([x]_1\right)^2$, and that $[x]_3$ is equal to $[x]_{2,3}$  reduced modulo $t\left([x]_1\right)$.

For $i\geq 1$, let $\llbracket x\rrbracket_{1,i}^{01}$ denote the $i$-th bit in the binary  representation of $\llbracket x\rrbracket_1$.
For example, if $\llbracket x\rrbracket_1=0.375=.011$ in binary,
then $\llbracket x\rrbracket_{1,1}^{01}=0$, $\llbracket x\rrbracket_{1,2}^{01}=\llbracket x\rrbracket_{1,3}^{01}=1$ and $\llbracket x\rrbracket_{1,i}^{01}=0$ for $i\ge 4$. We let $\llbracket x\rrbracket_{1}^{\pm1}$ denote the vector in $\{\pm1\}^{t\left([x]_1-1\right)}$ whose $i$-th coordinate is equal to $2\llbracket x\rrbracket_{1,i}^{01}-1$ (i.e.~$1$ is mapped to $+1$ and $0$ is mapped to $-1$).

For $j\in \{2,3\}$ and $i\geq 0$, let $[x]_{j,i}^{01}$ denote the $i$-th bit in the binary  representation of $[x]_j$. For example, if $[x]_j= 5 = 2^0 + 2^2$, then $[x]_{j,0}^{01}=[x]_{j,2}^{01}=1$, $[x]_{j,1}^{01}=0$ and $[ x]_{j,i}^{01}=0$ for $i\ge 3$. We let $[x]_{j}^{\pm1}$ denote the vector in $\{\pm1\}^{t\left([x]_1-1\right)}$ whose $(i+1)$-th coordinate is equal to $2[x]_{j,i}^{01}-1$.

\begin{figure}[htbp]
\begin{center}
\epsfbox{svejk.1}
\end{center}
\caption{The \svejk graphon.}
\end{figure}

The \svejk graphon $W_S$ has ten parts $A$, $B$, $C$, $D$, $E$, $F$, $G$, $P$, $Q$ and $R$.
For simplicity, we will define the graphon $W_S$ as a function $W_{13}$ from $[0,13)\times [0,13)$ to $[0,1]$, and
we set $W_S(x,y)=W_{13}(13x,13y)$.
All parts of $W_{13}$ except for $Q$ have measure one and we associate each of them with the unit interval $[0,1)$,
i.e.~we view the points of those parts as points in $[0,1)$. The remaining part $Q$ is associated with $[0,4)$.

\begin{table}[htbp]
\begin{center}
\begin{tabular}{|l|l|}
\hline
A pair $(x,y)$ belongs to & The value of $W_{13}(x,y)$ is $1$ if and only if\\
\hline
$A\times (A\cup B\cup \cdots \cup G)$ & $[x]_1=[y]_1$ \\
$B\times (B\cup E\cup F\cup G)$ & $[x]_1=[y]_1$ and $[x]_2=[y]_2$ \\
$B\times C$ & $t([x]_1-1)=[y]_1$ \\
$D\times C$ & $[x]_1=[y]_1+1$ \\
$D\times D$ & $[x]_1=[y]_1=1$ \\
$D\times G$ & $[x]_1=[y]_1$ and $\llbracket y\rrbracket_1\le \frac{1}{2} + \frac{\scalar{[x]_{2}^{\pm1}}{[x]_{3}^{\pm1}}}{4t\left([x]_1-1\right)^{1/2}}$ \\
$E\times C$ & $\llbracket x\rrbracket_{1,[y]_1}^{01}=1$ \\
$E\times D$ & $y\le 1-\llbracket x\rrbracket_1$ \\
$F\times C$ & $[y]_1\le t([x]_1-1)$, $\llbracket x\rrbracket_{1,[y]_1}^{01}=1$ and $\llbracket y\rrbracket_1\le t([x]_1)^{-1} 2^{[y]_1}$ \\
$F\times E$ & $[x]_1=[y]_1$ and $\llbracket y\rrbracket_1\le\frac{1}{2} - \frac{\scalar{[x]_{2}^{\pm1}}{[x]_{3}^{\pm1}}}{4t\left([x]_1-1\right)}$ \\
$F\times (D\cup F)$ or $G\times G$ & $[x]_{2,3}=[y]_{2,3}$ \\
$F\times G$ & $[x]_{3}=[y]_2$ \\
$G\times C$ & $[y]_1\le t([x]_1-1)$ and $\llbracket y\rrbracket_1\le t([x]_1)^{-1}2^{[y]_1}$ \\
$G\times E$ & $[x]_1=[y]_1$ and $1-\llbracket x\rrbracket_1\le \frac{1}{2} + t\left([x]_1-1\right)^{1/2}\left(\llbracket y\rrbracket_1-\frac{1}{2}\right)$ \\
$P\times (A\cup B\cup C\cup D)$ & $x\le y$ \\
$P\times (E\cup F\cup G\cup P)$ & $x\ge 1-y$ \\
\hline
\end{tabular}
\end{center}
\caption{The definition of the \svejk graphon on $(A\cup \cdots\cup G\cup P)^2$
         except on $C^2$, $E^2$, $B\times D$ and $D\times B$.}
\label{tab-svejk}
\end{table}

We will first define the values of the graphon $W_{13}$ between the pairs of the parts not involving $Q$ and $R$.
The graphon $W_{13}$ has values zero and one on $(A\cup \cdots \cup G\cup P)^2$
except on $C^2$, $E^2$, $B\times D$ and $D\times B$.
Table~\ref{tab-svejk} determines the values of $W_{13}$ in this zero-one case.
The values of $W_{13}$ on $C^2$, $E^2$ and $B\times D$ (by symmetry, this also determines the values on $D\times B$)
are defined as follows.
Note that the definition uses the graphon $W_{\CF}^m$ analyzed in Section~\ref{sec:weak},
which was defined just before the statement of Theorem~\ref{thm-conlon-fox}.

$$W_{13}(x,y)=\left\{\begin{array}{cl}
                2^{-2^{[x]_1-1}} & \mbox{if $[x]_1=[y]_1$, and} \\
		0 & \mbox{otherwise,}
                \end{array}\right.\mbox{ for $(x,y)\in C^2$,}$$

$$W_{13}(x,y)=\left\{\begin{array}{cl}
                W_{\CF}^{t([x]_1-1)}(\llbracket x\rrbracket_1,\llbracket y\rrbracket_1) & \mbox{if $[x]_1=[y]_1$, and} \\
		0 & \mbox{otherwise,}
                \end{array}\right.\mbox{ for $(x,y)\in E^2$, and}$$

$$W_{13}(x,y)=\left\{\begin{array}{cl}
                t([x]_1)^{-1} & \mbox{if $[x]_1=[y]_1$, and} \\
		0 & \mbox{otherwise,}
                \end{array}\right.\mbox{ for $(x,y)\in B\times D$.}$$

\noindent We have defined the values of the graphon $W_{13}$ on $(A\cup \cdots \cup G\cup P)^2$,
i.e.~between all pairs of its parts not involving $Q$ and $R$.

The part $Q$ is used to equalize degrees of the vertices in the parts $A,\ldots,G,P$,
i.e., to make the values
$$\frac{1}{13}\int_{[0,13)} W_{13}(x,y)\dd y\;\mbox{,}$$
to be the same for all $x$ from the same part;
see Section~\ref{sec-constraints} for further details on the degree of a vertex in a graphon.
If $x\in A\cup \cdots \cup G\cup P=\overline{Q\cup R}$ and $y\in Q$, then
$$W_{13}(x,y)=\frac{1}{4}\left(4-\int\limits_{\overline{Q\cup R}}W_{13}(x,z)\dd z\right)\;\mbox{.}$$
It is straightforward to verify that $W_{13}(x,y)\in [0,1]$ for every $(x,y)\in\overline{Q\cup R}\times Q$.

The part $R$ distinguishes the parts by vertex degrees. If $y\in R$, then
$$W_{13}(x,y)=\left\{\begin{array}{cl}
                1/8 & \mbox{if $x\in B$,}\\
                2/8 & \mbox{if $x\in C$,}\\
                3/8 & \mbox{if $x\in D$,}\\
                4/8 & \mbox{if $x\in E$,}\\
                5/8 & \mbox{if $x\in F$,}\\
                6/8 & \mbox{if $x\in G$,}\\
                7/8 & \mbox{if $x\in P$, and}\\
                0 & \mbox{otherwise.}
		\end{array}\right.$$
Finally, the graphon $W_{13}$ is equal to $1$ on $Q\times Q$.

The vertices in each of the ten parts of the \svejk graphon have the same degree (note that
$W(x_1,y)=W(x_2,y)$ for any two vertices $x_1,x_2\in Q$ and any $y\in [0,13)$).
This degree is given in Table~\ref{tab-degrees}.
We have not computed the degree of the vertices in the part $Q$ exactly
since it is enough to establish that this degree is larger than (and thus distinct from)
the degrees of the vertices in the other parts.

\begin{table}[htbp]
\begin{center}
\begin{tabular}{|l|c|c|c|c|c|c|c|c|c|c|}
\hline
Part & $A$ & $B$ & $C$ & $D$ & $E$ & $F$ & $G$ & $P$ & $Q$ & $R$ \\\hline & & & & & & & & & & \\[-1em] 
Degree & $\frac{32}{104}$ & $\frac{33}{104}$ & $\frac{34}{104}$ & $\frac{35}{104}$ & $\frac{36}{104}$ & $\frac{37}{104}$ & $\frac{38}{104}$ & $\frac{39}{104}$ & $\ge \frac{40}{104}$ & $\frac{28}{104}$ \\[.15em]\hline
\end{tabular}
\end{center}
\caption{The degrees of the vertices in each part of the \svejk graphon.}
\label{tab-degrees}
\end{table}

We finish this section by establishing that the \svejk graphon has no weak regular partitions with few parts.

\begin{proposition}
\label{thm-svejk-lower}
The \svejk graphon $W_S$ has no weak $\varepsilon$-regular partition with fewer than $2^{t(n)/4}$ parts
if $\varepsilon<\frac{1}{2^{24+2n}t(n)^{1/2}}$ and $n\ge 4$.
In particular, there exists a sequence of positive reals $\varepsilon_i$ tending to $0$ such that
every weak $\varepsilon_i$-regular partition of $W_S$ has at least $2^{\Omega\left(\varepsilon_i^{-2}/2^{5\log^*\varepsilon_i^{-2}}\right)}$ parts.
\end{proposition}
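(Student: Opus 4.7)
The plan is to exploit the fact that within the $(n+1)$-th segment of the part $E$, the \svejk graphon $W_S$ contains a rescaled copy of the Conlon--Fox graphon $W_{\CF}^{t(n)}$, and to apply a variant of Theorem~\ref{thm-conlon-fox} to this embedded copy.

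Fix $n \ge 4$ and set $m = t(n)$. Via the rescaling $W_S(x,y) = W_{13}(13x,13y)$, identify $E$ with the unit interval, and let $S \subseteq [0,1)$ be the preimage of the $(n+1)$-th segment of $E$; so $|S| = 2^{-(n+1)}/13$. Let $\phi : S \to [0,1)$ be the affine bijection that rescales $S$ onto $[0,1)$ (explicitly, $\phi(x) = \llbracket 13x - 4 \rrbracket_1$ after identifying $E$ with $[0,1)$ as the fifth unit subinterval of $[0,13)$); it has constant Jacobian $|S|^{-1}$. The definition of $W_{13}$ on $E^2$ then gives $W_S(x,y) = W_{\CF}^{m}(\phi(x), \phi(y))$ for every $(x,y) \in S \times S$.

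Now suppose $U_1, \ldots, U_K$ is a weak $\varepsilon$-regular partition of $W_S$ with $\varepsilon < 1/(2^{24+2n} t(n)^{1/2})$, and set $U_i^* = \phi(U_i \cap S)$. A change of variables yields $d_{W_{\CF}^m}(A^*,B^*) = |S|^{-2} d_{W_S}(A,B)$ and $|U_i^* \cap A^*| = |S|^{-1}|U_i \cap A|$ for any measurable $A^*, B^* \subseteq [0,1)$ with preimages $A,B \subseteq S$ under $\phi$. Substituting into the $\varepsilon$-regularity condition for $W_S$ applied to $A,B$ and dividing by $|S|^2$ gives
\begin{equation*}
\left| d_{W_{\CF}^m}(A^*,B^*) - \sum_{i,j \in [K]} \alpha_{ij} \, |U_i^* \cap A^*| \, |U_j^* \cap B^*| \right| \le \varepsilon/|S|^2,
\end{equation*}
where $\alpha_{ij} = d_{W_S}(U_i,U_j)/(|U_i||U_j|)$. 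The essential point is that the proof of Theorem~\ref{thm-conlon-fox} uses the regularity condition only to bound $|d(A^-,B) - d(A^+,B)|$ for pairs of sets with $|A^- \cap U_t| = |A^+ \cap U_t|$ for every $t$; in that comparison the step weights cancel, so the argument is valid with any weights in place of the canonical ones. Applying it to $W_{\CF}^{m}$ with partition $\{U_i^*\}$, weights $\{\alpha_{ij}\}$, and effective parameter $\eta = \varepsilon/|S|^2 = 169 \cdot 4^{n+1} \varepsilon$, one checks (using $169 < 2^8$) that both $\eta < 2^{-14} m^{-1/2}$ and $m \ge 25$ hold for $n \ge 4$; hence Theorem~\ref{thm-conlon-fox} yields $K \ge 2^{m/4} = 2^{t(n)/4}$.

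For the final claim, set $\varepsilon_n = 1/(2^{24+2n} t(n)^{1/2})$ for $n \ge 4$, so $\varepsilon_n \to 0$ and $\varepsilon_n^{-2} = 2^{48+4n} t(n)$. Since $\log t(n) = t(n-1)$ dominates $48+4n$ for large $n$, we have $\log^* \varepsilon_n^{-2} \le n+1$, hence $\varepsilon_n^{-2}/2^{5 \log^* \varepsilon_n^{-2}} = O(2^{-n} t(n))$, which is at most $t(n)/4$ for $n$ sufficiently large. Therefore $2^{t(n)/4} \ge 2^{\Omega(\varepsilon_n^{-2}/2^{5\log^*\varepsilon_n^{-2}})}$, as required. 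The main subtlety in the whole proof is the observation that the Conlon--Fox argument is insensitive to the choice of step weights; without it, the mismatch between the canonical weights for $W_S$ and those for the embedded $W_{\CF}^{m}$ would obstruct the reduction.
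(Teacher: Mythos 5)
Your proof takes the same route as the paper: locate the rescaled copy of $W_{\CF}^{t(n)}$ inside $E^2$ and feed the restricted and rescaled partition into the argument of Theorem~\ref{thm-conlon-fox}. Your explicit observation that the Conlon--Fox proof only uses the regularity hypothesis to cancel the step weights between $A^-$ and $A^+$ (so the induced partition need not be weakly regular with its own canonical weights) is a genuine subtlety that the paper's one-line reduction glosses over; the only small slip is that $\varepsilon_n = 1/(2^{24+2n}t(n)^{1/2})$ achieves equality rather than the strict inequality required in the first part of the proposition, so you should take $\varepsilon_n$ slightly smaller, e.g.\ $1/(2^{25+2n}t(n)^{1/2})$ as the paper does.
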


\begin{proof}
The graphon $W_S$ contains a copy of $W_{\CF}^{t(n)}$ scaled by $2^{-n-1}/13$ for every $n\in\NN$.
Note that a weak $\varepsilon$-regular partition of $W_S$ yields
a weak $\left(\varepsilon 2^{-2n}/676\right)$-regular partition of $W_{\CF}^{t(n)}$ with fewer or the same number of parts.
It follows that $W_S$ cannot have a weak $\varepsilon$-regular partition with fewer than $2^{t(n)/4}$ parts
for $\varepsilon<\frac{1}{676\cdot 2^{14+2n}\cdot t(n)^{1/2}}$ and $n\ge 4$ by Theorem~\ref{thm-conlon-fox}.

Setting $\varepsilon_i=\frac{1}{2^{25+2i}t(i)^{1/2}}$, we obtain the desired sequence of $\varepsilon_i$'s.
Note that
$$\lim_{i\to\infty}\frac{\log^*\varepsilon_i^{-2}}{i}=\lim_{i\to\infty}\frac{\log^*\left(2^{4i+50}t(i)\right)}{i}=1$$ and so
$\frac{t(i)}{4}\in\Omega\left(\varepsilon_i^{-2}/2^{5\log^*\varepsilon_i^{-2}}\right)$ as desired.
\end{proof}

\section{Constraints}
\label{sec-constraints}

The proof that the \svejk graphon is finitely forcible uses the notion of decorated constraints,
which was introduced in~\cite{bib-comp} and further developed in~\cite{bib-inf}.
We now present the notion following the lines of~\cite{bib-inf}.

A {\em constraint} is an equality between two density expressions
where a {\em density expression} is recursively defined as follows:
a real number or a graph $H$ are density expressions, and
if $D_1$ and $D_2$ are two density expressions, then the sum $D_1 + D_2$ and
the product $D_1\cdot D_2$ are also density expressions.
The value of the density expression for a graphon $W$
is the value obtained by substituting for each graph $H$ its density in $W$.

As observed in \cite{bib-comp},
if $W$ is the unique graphon (up to weak isomorphism) that satisfies a finite set $\CC$ of constraints, 
then it is finitely forcible.
In particular,
$W$ is the unique graphon with densities of subgraphs appearing in $\CC$ equal to their densities in $W$. 
Hence, a possible way of establishing that a graphon $W$ is finitely forcible is providing a finite set of constraints $\CC$ such that
the graphon $W$ is the unique graphon up to weak isomorphism that satisfies these constraints.

If $W$ is a graphon, then the points of $[0,1]$ can be viewed as vertices and
we can also speak of the {\em degree} of a vertex $x\in [0,1]$, defined as
$$\deg_W(x)=\int_{[0,1]} W(x,y)\dd y\;\mbox{.}$$
Note that the degree is well-defined for almost every vertex of $W$.
We will omit the subscript $W$ when the graphon is clear from the context. 
A graphon $W$ is {\em partitioned}
if there exist $k\in\NN$ and positive reals $a_1,\ldots,a_k$ summing to one and
distinct reals $d_1,\ldots,d_k$ between $0$ and $1$ such that
the set of vertices of $W$ with degree $d_i$ (referred to as a \emph{part} of the partitioned graphon) has measure $a_i$.
The following lemma was proven in~\cite{bib-comp}.

\begin{lemma}
\label{lm-partition}
Let $a_1,\ldots,a_k$ be positive real numbers summing to one and
let $d_1,\ldots,d_k$ be distinct reals between $0$ and $1$. There exists
a finite set of constraints $\CC$ such that any graphon $W$ satisfying $\CC$
is a partitioned graphon with parts of sizes $a_1,\ldots,a_k$ and degrees $d_1,\ldots,d_k$, and
any partitioned graphon with parts of sizes $a_1,\ldots,a_k$ and degrees $d_1,\ldots,d_k$
satisfies $\CC$.
\end{lemma}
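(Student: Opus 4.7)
My plan is to encode the two conditions defining a partitioned graphon — every vertex has degree in $\{d_1,\ldots,d_k\}$, and the measure of the $i$-th degree class equals $a_i$ — by finitely many equalities of density expressions. The key tool is the identity
$$\int_{[0,1]}\deg_W(x)^j\dd x=\sum_{H}\alpha_{j,H}\,d(H,W),$$
where the sum is over graphs $H$ on $j+1$ vertices and the $\alpha_{j,H}$ are fixed rationals; equivalently, this integral is the homomorphism density of the star $K_{1,j}$ in $W$, which by standard inclusion--exclusion on edge sets is a polynomial combination of induced subgraph densities. Thus every polynomial in the moments $m_j(W):=\int_{[0,1]}\deg_W(x)^j\dd x$ is a density expression in the sense of Section~\ref{sec-constraints}.

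To force $\deg_W(x)\in\{d_1,\ldots,d_k\}$ for almost every vertex $x$, I set $P(t)=\prod_{i=1}^k(t-d_i)$ and expand $P(t)^2=\sum_{j=0}^{2k}c_j t^j$. The first constraint I add to $\CC$ is
$$\sum_{j=0}^{2k}c_j\,m_j(W)=0.$$
Since the left-hand side equals $\int_{[0,1]}P(\deg_W(x))^2\dd x$ and the integrand is nonnegative, this forces $P(\deg_W(x))=0$ for almost every $x$, i.e.~almost every vertex of $W$ has degree in $\{d_1,\ldots,d_k\}$.

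Assuming this first constraint, let $a_i^\prime$ denote the measure of the set of vertices of $W$ with degree exactly $d_i$. For each $j\ge 0$ one has $m_j(W)=\sum_{i=1}^k a_i^\prime\,d_i^j$. I add to $\CC$ the $k-1$ further equalities
$$m_j(W)=\sum_{i=1}^k a_i\,d_i^j\qquad (j=1,\ldots,k-1),$$
each again an equality of density expressions because the right-hand sides are fixed real numbers. Together with the tautology $m_0(W)=1=\sum_i a_i^\prime$, these form a $k\times k$ Vandermonde linear system in $(a_1^\prime,\ldots,a_k^\prime)$; since the $d_i$ are distinct the system is invertible and forces $a_i^\prime=a_i$ for every $i$. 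Conversely, any partitioned graphon with the prescribed sizes and degrees has degree function taking only the values $d_i$ on sets of measure $a_i$, so all the above constraints hold automatically. The only nontrivial bookkeeping, and the main implementation obstacle, is expressing each $m_j(W)$ explicitly as a polynomial in the induced densities $d(H,W)$ of graphs $H$ on $j+1$ vertices, so that the displayed equalities are genuine equalities of density expressions in the formal sense of the paper.
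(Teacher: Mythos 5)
Your proof is correct and follows the same moment-based strategy that the paper's reference~\cite{bib-comp} uses for this lemma (which the present paper cites rather than reproves): force $\deg_W$ to take values in $\{d_1,\dots,d_k\}$ via $\int P(\deg_W(x))^2\dd x=0$, then pin down the measures of the degree classes with the first $k$ moments via a Vandermonde argument, noting throughout that each moment $m_j(W)=t(K_{1,j},W)$ is a rational linear combination of induced densities $d(H,W)$ and hence a density expression.
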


We now introduce a stronger type of constraints, which was also used in~\cite{bib-inf,bib-comp}.
We will refer to the constraints introduced earlier as {\em ordinary constraints} if a distinction needs to be made.
Suppose that $W$ is a partitioned graphon with parts $A_i\subseteq [0,1]$, $i\in [k]$,
where the part $A_i$ has measure $a_i$ and it contains vertices of degrees $d_i$.
A {\em decorated graph} is a graph with some vertices distinguished as {\em roots} and
each vertex labeled with one of the parts $A_1,\ldots,A_k$;
the roots of a decorated graph come with a fixed order.
Two decorated graphs are {\em isomorphic}
if they have the same number of roots and there exists a bijection between their vertices
that is a graph isomorphism,
that maps roots to roots only while preserving their order, and
that preserves vertex labels.
Two decorated graphs are {\em compatible}
if the subgraphs induced by their roots are isomorphic (as decorated graphs).
A {\em decorated constraint} is a constraint
where all graphs appearing in the density expressions are compatible decorated graphs.
Note that decorated graphs and constraints are always defined
with a particular type of a partition of a graphon (i.e.~names of the parts) in mind.

We now define when a graphon $W$ satisfies a decorated constraint.
Fix a decorated constraint $C$.
Since all decorated graphs appearing in $C$ are compatible,
the roots of each of the decorated graphs appearing in $C$ induce the same decorated graph.
Let $H_0$ be this decorated graph and let $n$ be the number of its vertices;
note that all $n$ vertices of $H_0$ are roots.
The decorated constraint $C$ is {\em satisfied}
if the following holds for almost every $n$-tuple $x_1,\ldots,x_n$ of vertices of $H_0$ such that
$x_i$ belongs to the part that the $i$-th vertex of $H_0$ is labeled with,
$W(x_i,x_j)>0$ if $x_i$ and $x_j$ are adjacent in $H_0$, and
$W(x_i,x_j)<1$ if they are not adjacent:
the two sides of the constraint $C$ are equal when
each decorated graph $H$ is substituted with the probability that
a $W$-random graph with vertices corresponding to those of $H$ is the decorated graph $H$ 
conditioned on the root vertices being $x_1,\ldots,x_n$ and inducing the graph $H_0$ and
conditioned on each of the non-root vertices chosen from a part that it is labeled with.
Note that we do not allow any permutation of vertices in this definition,
i.e., the requirement is stronger than saying that the $W$-random graph is isomorphic to the decorated graph $H_0$.
A possible way of satisfying the constraint $C$ is that
the measure of the $n$-tuples $x_1,\ldots,x_n$ of vertices of $H_0$ with the properties given above is zero;
if this is the case, the constraint $C$ is said to be {\em null-satisfied}.

Before proceeding further, let us give an example of evaluating a decorated constraint.
Consider a partitioned graphon $W$ with two parts $A$ and $B$, each of measure one half, such that
$W$ is equal to one on $A\times A$, to zero on $B\times B$, and to one half on $A\times B$.
The graphon $W$ is depicted in Figure~\ref{fig-example1}.
Let $H$ be a decorated graph with two roots that are adjacent and both labeled with $A$ and
two non-root vertices $v_1$ and $v_2$ that are not adjacent, both labeled with $B$,
$v_1$ is adjacent to one of the roots and $v_2$ is adjacent to both roots.
The decorated graph $H$ is also depicted in Figure~\ref{fig-example1}.
If $H$ appears in a decorated constraint and its two roots are from the part $A$ of the graphon $W$,
then it will be substituted with the probability $1/16$ when evaluating the constraint with respect to $W$.
Note that if we allowed isomorphisms of decorated graphs when evaluating decorated constraints,
then this probability would be $2/16$ because the order in that the non-root vertices are chosen would be irrelevant.

\begin{figure}
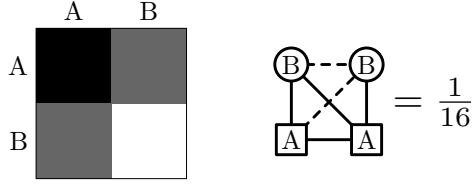

\begin{center}
\epsfbox{svejk.82} \hskip 10mm
\epsfbox{svejk.83}
\end{center}
\caption{An example of evaluating a decorated constraint.}
\label{fig-example1}
\end{figure}

The following lemma was proven in~\cite[Lemma 2]{bib-comp}, also see~\cite[Lemma 3]{bib-inf}.
\begin{lemma}
\label{lm-decorated}
Let $k\in \mathbb{N}$, let $a_1 , \ldots, a_k$ be positive real numbers summing to one, and
let $d_1, \ldots, d_k$ be distinct reals between zero and one.
If $W$ is a partitioned graphon with $k$ parts formed by vertices of degree $d_i$ and measure $a_i$ each,
then any decorated constraint can be expressed as a single ordinary constraint,
i.e.~$W$ satisfies the decorated constraint if and only if it satisfies the ordinary constraint.
\end{lemma}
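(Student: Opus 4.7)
The plan is to reduce any decorated constraint to a single ordinary constraint in two stages: first, identify each part $A_i$ via a polynomial expression in the degree function, and second, pass from the pointwise equality ``$D_1=D_2$ for almost every valid $n$-tuple'' to one integral equality via a squared-difference trick. Throughout, ordinary density expressions may encode arbitrary polynomials in quantities $d(F,W)$, and one uses the basic identity $\int_{[0,1]}W(x,y)\dd y=\deg_W(x)$ for almost every $x$.

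Since $d_1,\ldots,d_k$ are distinct, Lagrange interpolation yields polynomials $p_1,\ldots,p_k\in\RR[t]$ with $p_i(d_j)=\delta_{ij}$, so that $p_i(\deg_W(x))=\mathbf{1}[x\in A_i]$ for almost every $x$; moreover, attaching $m$ pendant vertices to a root $v$ of a rooted graph multiplies its rooted density by $\deg_W(v)^m$, so polynomial expressions in root degrees are directly available inside rooted densities. Now let $H$ be a decorated graph with roots $v_1,\ldots,v_n$ and non-root vertices $v_{n+1},\ldots,v_N$ labelled by parts $A_{\ell(1)},\ldots,A_{\ell(N)}$. The conditional probability $f_H(x_1,\ldots,x_n)$ of the excerpt equals, up to the factor $\prod_{i>n}a_{\ell(i)}^{-1}$,
\[
\int\prod_{ij\in E(H)}W(x_i,x_j)\prod_{ij\notin E(H)}\bigl(1-W(x_i,x_j)\bigr)\prod_{i>n}p_{\ell(i)}(\deg_W(x_i))\dd x_{n+1}\cdots\dd x_N,
\]
which, after expanding every $(1-W)$-factor and every polynomial $p_{\ell(i)}$, exhibits $f_H$ as a polynomial in the rooted values $W(x_i,x_j)$ and $\deg_W(x_i)$ (for $i,j\le n$) whose coefficients are ordinary densities of $W$.

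To convert pointwise equality into a single equation, let $H_0$ be the common root-subgraph of the decorated graphs appearing in the constraint and set
\[
\Phi(x_1,\ldots,x_n)=\prod_{ij\in E(H_0)}W(x_i,x_j)\prod_{ij\notin E(H_0)}\bigl(1-W(x_i,x_j)\bigr)\prod_{i=1}^{n}p_{\ell(i)}(\deg_W(x_i)).
\]
Then $\Phi\ge 0$, and $\Phi(x_1,\ldots,x_n)>0$ precisely on the set of valid tuples defined in the excerpt. Consequently, the decorated constraint $D_1=D_2$ is equivalent to
\[
\int_{[0,1]^n}\Phi\cdot(D_1-D_2)^2\dd x_1\cdots\dd x_n=0,
\]
since the non-negative integrand vanishes almost everywhere iff $D_1=D_2$ almost everywhere on the valid region. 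A product of rooted densities sharing a common root set is itself the rooted density of the graph obtained by gluing the factors along their roots, and integrating over the roots converts a rooted density into an ordinary one; hence the displayed integral expands into a polynomial in ordinary subgraph densities of $W$, providing the single ordinary constraint required.

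The main obstacle is the bookkeeping in this final expansion: one must verify that after multiplying out $\Phi\cdot(D_1-D_2)^2$, performing all non-root integrations, and then integrating over the roots, each resulting monomial coincides with the density of some finite graph in $W$. This is combinatorial but tedious; the polynomial encoding from Lagrange interpolation together with the gluing-and-integrating identity for rooted densities must be tracked carefully to ensure that no non-density operation slips in.
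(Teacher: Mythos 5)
Your overall strategy — Lagrange interpolation in the degree to select parts, and the squared-difference trick $\int \Phi\,(D_1-D_2)^2 = 0$ to convert an almost-everywhere pointwise equality into a single integral identity — is indeed the standard way to prove this (the paper itself defers the proof to~\cite[Lemma~2]{bib-comp}, and that proof uses essentially these ingredients). The observation that $\Phi>0$ precisely on the valid root tuples (using that for a partitioned graphon $\deg_W(x)\in\{d_1,\dots,d_k\}$ a.e., so $p_{\ell(i)}(\deg_W(x_i))$ is $\{0,1\}$-valued a.e.) is exactly the right way to make the equivalence go through in both directions, including the null-satisfied case.

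There is, however, a concrete error in your formula for $f_H$. You keep the factors $W(x_i,x_j)$ and $(1-W(x_i,x_j))$ for pairs of \emph{roots} $i,j\le n$ inside the integral. But $f_H$ is a \emph{conditional} probability, conditioned on the roots inducing $H_0$; since $H$ restricted to its roots is $H_0$, those root--root factors appear both in the numerator and in the normalising denominator, so they cancel and must \emph{not} appear in $f_H$. This is not cosmetic: your $\Phi$ already carries one factor of $W(x_i,x_j)$ or $(1-W(x_i,x_j))$ for each root pair, and a generic monomial $f_{H_1}f_{H_2}$ in $(D_1-D_2)^2$ would, with your formula, carry two more. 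The expanded integrand then contains powers $W(x_i,x_j)^m$ with $m\ge 2$ for root pairs. Such multi-edge quantities (e.g.~$\int W(x,y)^2\,\mathrm{d}x\,\mathrm{d}y$) are \emph{not} expressible as finite density expressions in simple graphs: for $\{0,1\}$-valued graphons they coincide with the single-edge density, but for general graphons they carry independent information, so the final ``polynomial in ordinary densities'' you need would fail to exist. Once you correct $f_H$ to include only factors for pairs $\{i,j\}$ with at least one non-root, each root pair receives exactly one factor (from $\Phi$), the gluing-along-roots identity for products of rooted densities introduces no repeated pairs, and the remaining expansion (inclusion--exclusion of the $(1-W)$ terms, pendant vertices for the powers of $\deg_W$, and division by part measures) does produce a genuine ordinary constraint as you describe.
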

\noindent By Lemma~\ref{lm-decorated},
we can equivalently work with (formally stronger) decorated constraints instead of ordinary constraints.

It is useful to fix some notation for visualizing decorated constraints.
We write the decorated constrains as expressions involving decorated graphs
where the roots are depicted by squares and non-root vertices by circles, and
each vertex is labeled with the name of the respective part of a graphon.
The solid lines connecting vertices correspond to the edges and dashed lines to the non-edges.
No connection between two vertices means that both edge or non-edge are allowed between the vertices,
i.e.~the picture should be interpreted as the sum of two graphs, one with an edge and with a non-edge.
If more than a single pair of vertices is not joined,
the picture should be interpreted as the multiple sum over all non-joined pairs of vertices,
which can lead to a sum containing several isomorphic copies of the same decorated graph.
An example is given in Figure~\ref{fig-example2}.
To avoid possible ambiguity, 
the drawings of the subgraph induced by the roots are identical for all decorated graphs in each constraint,
which makes clear which roots correspond to each other.

\begin{figure}
\begin{center}
\epsfbox{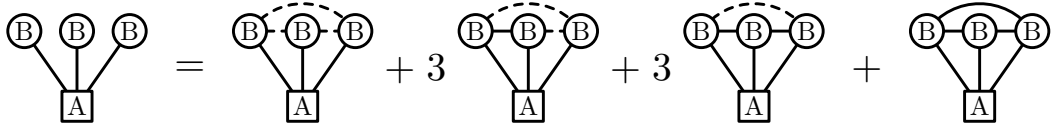}
\end{center}
\caption{An example of interpreting a drawing of a decorated graph with some unspecified adjacencies.}
\label{fig-example2}
\end{figure}

We remark that
if we allowed isomorphisms of decorated graphs in the computation of probabilities when evaluating decorated constraints,
then we would have to define the interpretation of the visualiziation of decorated constraints in a different way.
In particular, a decorated graph with some unspecified adjacencies between its vertices
would be replaced with the sum where each isomorphic copy appears with the coefficient one,
i.e., the coefficients $3$ would become $1$ in Figure~\ref{fig-example2}.
In this different setting, the visualization of the decorated constraints would actually be identical
with the following single exception: the first constraint on the last line in Figure~\ref{fig-subsegments-3},
where a coefficient to account for isomorphisms of the two graphs appearing in the constraint would have to be included.

We finish this section with the following lemma, which is an easy corollary of Lemma~\ref{lm-decorated}. In essence, it says that if a graphon $W_0$ can be finitely forced in its own right, then it can be forced on a single part of a partitioned graphon $W$ without affecting the structure of the other parts.

\begin{lemma}
\label{lm-subforcing}
Let $W_0$ be a finitely forcible graphon, let $a_1,\dots,a_k$ be positive reals summing to one and let $d_1,\dots,d_k$ be distinct reals between zero and one. Then there exists a finite set $\CC$ of decorated constraints such that a partitioned graphon $W$ with $k$ parts formed by vertices of degree $d_i$ and measure $a_i$ each satisfies $\CC$ if and only if the subgraphon of $W$
induced by the $m$-th part is weakly isomorphic to $W_0$. In other words, if the $m$-th part is denoted $A_m$, then $W$ satisfies $\CC$ if and only if
there exist measure preserving maps $\varphi:[0,a_m]\to A_m$ and $\varphi_0:[0,1]\to [0,1]$ such that
$W(\varphi(xa_m),\varphi(ya_m))=W_0(\varphi_0(x),\varphi_0(y))$ for almost every pair $(x,y)\in [0,1]^2$.
\end{lemma}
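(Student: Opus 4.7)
\emph{Proof plan.} The approach is to translate the finitely many density equations that force $W_0$ into decorated constraints which reference only the $m$-th part of the partition.

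First, invoke the finite forcibility of $W_0$: fix graphs $H_1,\dots,H_\ell$ such that every graphon $W'$ with $d(H_i,W')=d(H_i,W_0)$ for all $i$ is weakly isomorphic to $W_0$. For each such $H_i$, let $\widetilde H_i$ be the decorated graph obtained from $H_i$ by labeling every vertex with the part $A_m$ and declaring no vertex to be a root. Since the subgraphs induced by the (empty) root sets of $\widetilde H_1,\dots,\widetilde H_\ell$ are trivially isomorphic, these decorated graphs are pairwise compatible and so
\[
\widetilde H_i \;=\; \frac{|\mathrm{Aut}(H_i)|}{|V(H_i)|!}\,d(H_i,W_0), \qquad i=1,\dots,\ell,
\]
forms a finite collection $\CC$ of valid decorated constraints.

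Next, I would unfold the meaning of such a zero-root constraint for a partitioned graphon $W$. With no roots, there is only the empty tuple to consider, and the definition prescribes that the quantity substituted for $\widetilde H_i$ is the probability that $|V(H_i)|$ vertices drawn independently and uniformly from $A_m$ realize $\widetilde H_i$ as a specifically labeled graph. Letting $W^{(m)}$ denote the subgraphon induced on $A_m$ rescaled to $[0,1]^2$, a short calculation identifies this probability with $\frac{|\mathrm{Aut}(H_i)|}{|V(H_i)|!}\,d(H_i,W^{(m)})$; the combinatorial factor arises because the decorated framework fixes a specific ordering of vertices, while $d(\cdot,\cdot)$ counts subgraphs only up to isomorphism. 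Hence the $i$-th constraint in $\CC$ is equivalent to $d(H_i,W^{(m)})=d(H_i,W_0)$.

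Putting the two steps together, a partitioned graphon $W$ with the prescribed parts satisfies $\CC$ if and only if $d(H_i,W^{(m)})=d(H_i,W_0)$ for every $i$, which by the choice of the $H_i$'s is equivalent to $W^{(m)}$ being weakly isomorphic to $W_0$; in particular, this yields the measure preserving maps $\varphi$ and $\varphi_0$ asked for in the statement. The only non-routine point of the argument is the bookkeeping of the normalization constant $|\mathrm{Aut}(H_i)|/|V(H_i)|!$ together with a careful verification that the zero-root interpretation of the decorated framework really does yield the rescaled density $d(H_i,W^{(m)})$; everything else is a direct transcription of finite forcibility into the decorated language.
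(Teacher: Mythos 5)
Your proposal is correct and follows essentially the same route as the paper: translate the finitely many density conditions forcing $W_0$ into zero-root decorated constraints in which every vertex of $H_i$ is labelled with $A_m$, then invoke finite forcibility of $W_0$ for one direction and weak isomorphism invariance of densities for the other. The one point of divergence is the normalization constant: the paper states the right-hand side as $d_i/|\mathrm{Aut}(H_i)|$, whereas you use $d_i\,|\mathrm{Aut}(H_i)|/|V(H_i)|!$, which is the value that matches the paper's stated evaluation rule (sampling a \emph{specific} labelled copy, with no permutations allowed); your normalization is the correct one, and the paper's appears to be a slip.
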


\begin{proof}
Let $H_1,\ldots,H_\ell$ and $d_1,\ldots,d_{\ell}$ be the subgraphs and their densities such that
$W_0$ is the unique graphon (up to weak isomorphism) with these densities.
The set $\CC$ is formed by $\ell$ decorated constraints:
the left side of the $i$-th constraint contains $H_i$ with all its vertices labelled by $A_m$ and
the right side is equal to $d_i$ divided by the number of automorphisms of $H_i$.
If the subgraphon of $W$ induced by $A_m$ is weakly isomorphic to $W_0$,
then clearly these constraints are satisfied.
On the other hand, since $W_0$ is forced by setting the densities of $H_i$ to $d_i$ for every $i\in [\ell]$,
the converse is true as well.
\end{proof}

\section{Finite forcibility of the \svejk graphon}
\label{sec-forcing}

Our final and longest section is devoted to proving that the \svejk graphon is finitely forcible.
We will prove this by exhibiting a finite set of constraints that the \svejk graphon satisfies and
showing that the \svejk graphon is the only graphon up to weak isomorphism that satisfies this set of constraints.
By Lemma~\ref{lm-partition}, there exists a finite set of constraints such that
any graphon that satisfies them is a partitioned graphon with ten parts of the sizes as in the \svejk graphon and
degrees of vertices in these parts as in Table~\ref{tab-degrees}.
Hence, we can work with decorated constraints with vertices labeled by the parts $A,\ldots,G$, $P$, $Q$ and $R$ (see Lemma~\ref{lm-decorated}).
We will use decorated constraints to enforce the structure of the graphon between pairs of its parts, one pair after another,
often building on the structure enforced by earlier constraints.
Table~\ref{tab-forcing} gives references to subsections where the structure between the particular pairs of parts is forced.

\begin{table}
\begin{center}
\begin{tabular}{|c|cccccccccc|}
\hline
& A & B & C & D & E & F & G & P & Q & R \\
\hline
A & \ref{sub-segments} & \ref{sub-segments} & \ref{sub-segments} & \ref{sub-segments} & \ref{sub-segments} & \ref{sub-segments} & \ref{sub-segments} & \ref{sub-coordinate} & \ref{sub-degrees} & \ref{sub-degrees} \\
B & \ref{sub-segments} & \ref{sub-subsegments} & \ref{sub-tower} & \ref{sub-tower} & \ref{sub-subsegments} & \ref{sub-subsegments} & \ref{sub-subsegments} & \ref{sub-coordinate} & \ref{sub-degrees} & \ref{sub-degrees} \\
C & \ref{sub-segments} & \ref{sub-tower} & \ref{sub-tower} & \ref{sub-segments} & \ref{sub-binary} & \ref{sub-binary} & \ref{sub-binary} & \ref{sub-coordinate}  & \ref{sub-degrees} & \ref{sub-degrees} \\
D & \ref{sub-segments} & \ref{sub-tower} & \ref{sub-segments} & \ref{sub-segments} & \ref{sub-binary} & \ref{sub-subsegments} & \ref{sub-dot} & \ref{sub-coordinate} & \ref{sub-degrees} & \ref{sub-degrees} \\
E & \ref{sub-segments} & \ref{sub-subsegments} & \ref{sub-binary} & \ref{sub-binary} & \ref{sub-main} & \ref{sub-dot} & \ref{sub-linear} & \ref{sub-coordinate} & \ref{sub-degrees} & \ref{sub-degrees} \\
F & \ref{sub-segments} & \ref{sub-subsegments} & \ref{sub-binary} & \ref{sub-subsegments} & \ref{sub-dot} & \ref{sub-subsegments} & \ref{sub-subsegments} & \ref{sub-coordinate} & \ref{sub-degrees} & \ref{sub-degrees} \\
G & \ref{sub-segments} & \ref{sub-subsegments} & \ref{sub-binary} & \ref{sub-dot} & \ref{sub-linear} & \ref{sub-subsegments} & \ref{sub-subsegments} & \ref{sub-coordinate} & \ref{sub-degrees} & \ref{sub-degrees} \\
P & \ref{sub-coordinate} & \ref{sub-coordinate} & \ref{sub-coordinate} & \ref{sub-coordinate} & \ref{sub-coordinate} & \ref{sub-coordinate} & \ref{sub-coordinate}  & \ref{sub-coordinate} & \ref{sub-degrees} & \ref{sub-degrees} \\
Q & \ref{sub-degrees} & \ref{sub-degrees} & \ref{sub-degrees} & \ref{sub-degrees} & \ref{sub-degrees} & \ref{sub-degrees} & \ref{sub-degrees} & \ref{sub-degrees} & \ref{sub-degrees} & \ref{sub-degrees} \\
R & \ref{sub-degrees} & \ref{sub-degrees} & \ref{sub-degrees} & \ref{sub-degrees} & \ref{sub-degrees} & \ref{sub-degrees} & \ref{sub-degrees} & \ref{sub-degrees} & \ref{sub-degrees} & \ref{sub-degrees} \\
\hline
\end{tabular}
\end{center}
\caption{The subsections of Section~\ref{sec-forcing} where the structure of the \svejk graphon between the corresponding pairs of parts is forced.}
\label{tab-forcing}
\end{table}

Fix a graphon $W_0$ that satisfies all the constraints presented in this section.
In particular, $W_0$ satisfies the constraints given by Lemma~\ref{lm-partition} and
it is a partitioned graphon with ten parts of the sizes as in the \svejk graphon and
degrees of vertices in these parts as in Table~\ref{tab-degrees}.
These ten parts of $W_0$ will be denoted by $A_0,\ldots,G_0$, $P_0$, $Q_0$ and $R_0$ in correspondence with the parts of the \svejk graphon.
We will show that $W_0$ is weakly isomorphic to the \svejk graphon.
To avoid cumbersome notation,
we will use some symbols, in particular, $\JJ$, $\xi$ and $\xi_x$, in a way specific to individual subsections.
The meaning will clearly be defined in the corresponding subsection, so no confusion could appear.

\subsection{Coordinate system}
\label{sub-coordinate}

The half-graphon $W_{\triangle}$, i.e.~the zero-one graphon defined by $W_{\triangle}(x,y)=1$ iff $x+y\ge 1$, is finitely forcible~\cite{bib-diaconis09+}, also see~\cite{bib-lovasz11+}.
By Lemma~\ref{lm-subforcing},
there exists a finite set of decorated constraints such that $W_0$ satisfies these constraints if and only if the subgraphon induced by the part $P_0$ is weakly isomorphic to the half-graphon $W_{\triangle}$. We insist that $W_0$ satisfies these constraints.

Let $X\in \{A,\ldots,G,P\}$. We use the symbol $X_0$ to refer to the corresponding element of $\{A_0,\ldots,G_0,P_0\}$. 
By the Monotone Reordering Theorem (see~\cite[Proposition A.19]{bib-lovasz-book} for more details),
there exist measure preserving maps $\varphi_X:X_0\to [0,|X_0|)$ and non-decreasing functions $f_X:[0,|X_0|)\to [0,1)$, 
such that
$$f_X(\varphi_X(x)) = 13 \int\limits_{P_0} W_0(x,z) \dd z$$
for almost every $x\in X_0$. Since we already know that the subgraphon of $W_0$ induced by $P$ is weakly isomorphic to $W_{\triangle}$, we must have $W_0(x,y)=1$ for almost every pair $(x,y)\in P_0^2$ with $f_P(\varphi_P(x))+f_P(\varphi_P(y))\ge 1$,
$W_0(x,y)=0$ for almost every pair $(x,y)\in P_0^2$ with $f_P(\varphi_P(x))+f_P(\varphi_P(y))<1$, and
$f_P(z)=13z$ for almost every $z\in [0,1/13)$.

Set $g_X(x)=f_X(\varphi_X(x))$ for $x\in X_0$ and $X\in\{A,\ldots,G,P\}$.
For completeness, let $g_Q$ and $g_R$ be any measurable maps from $Q_0$ and $R_0$ to $Q\cong [0,4)$ and $R\cong [0,1)$ such that
for any measurable subset $Z$ of $[0,4)$ and $[0,1)$, respectively, we have $|g_Q^{-1}(Z)| = |Z|/13$ and $|g_R^{-1}(Z)| = |Z|/13$, respectively. 
Each $g_X$ can be viewed as a map from $X_0$ to the part $X$ of the graphon $W_{13}$.
The maps $g_A,\ldots,g_G$, $g_P$, $g_Q$ and $g_R$ constitute a map $g$ from the vertices of $W_0$ to the vertices of $W_{13}$ and so to those of $W_S$.

We will argue that the map $g$ as a map from the vertices $W_0$ to the vertices of $W_S$ is measure preserving and
we will show that $W_0(x,y)=W_S(g(x),g(y))$ for almost every pair $(x,y)\in [0,1]^2$.
This will prove that the graphons $W_0$ and $W_S$ are weakly isomorphic.
So far, we have established that $W_0(p,p')=W_S(g(p),g(p'))$ for almost every pair $(p,p')\in P_0^2$ and
that the map $g$ is measure preserving when restricted to $P_0\cup Q_0\cup R_0$.

\begin{figure}[htbp]
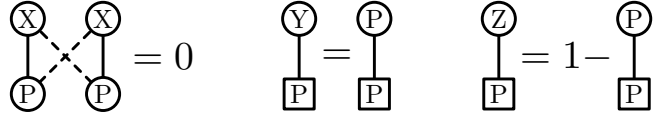

\begin{center}
\epsfbox{svejk.4} \hskip 10mm
\epsfbox{svejk.2} \hskip 10mm
\epsfbox{svejk.3}
\end{center}
\caption{Decorated constraints used in Subsection~\ref{sub-coordinate}
         where $X\in\{A,B,C,D,E,F,G\}$, $Y\in\{E,F,G\}$ and $Z\in\{A,B,C,D\}$.}
\label{fig-coordinate}
\end{figure}

Let us consider the decorated constraints depicted in Figure~\ref{fig-coordinate}.
Let $N_X(x)=\{y\in X_0\;|\;W_0(x,y)=1\}$ for $x\in P_0$ and $X\in\{A,\ldots,G\}$.
The first constraint implies that the graphon $W_0$ is zero-one valued almost everywhere on $P_0\times(A_0\cup\cdots\cup G_0)$ and
that $N_X(x)\setminus N_X(x')$ or $N_X(x')\setminus N_X(x)$ has measure zero for almost every pair $(x,x')\in P_0^2$ and for every $X\in\{A,\ldots,G\}$.
The second constraint in Figure~\ref{fig-coordinate} implies for $Y\in\{E,F,G\}$ that
the measure of $N_Y(x)$ is $g_P(x)$ for almost every $x\in P_0$.
Hence, it must hold that
$f_Y(y)=13y$ for $y\in [0,1/13)$ and
$W_0(x,y)=1$ for almost every $(x,y)\in P_0\times Y_0$ with $g_P(x)+g_Y(y)\ge 1$.
Similarly,
the third constraint in Figure~\ref{fig-coordinate} implies for $Z\in\{A,B,C,D\}$ that
the measure of $N_Z(x)$ is $1-g_P(x)$ for almost every $x\in P_0$.
Consequently, it holds that
$f_Z(y)=13y$ for $y\in [0,1/13)$ and
$W_0(x,y)=1$ for almost every $(x,y)\in P_0\times Z_0$ with $g_P(x)\ge g_Z(x)$.
We conclude that $g$ is a measure preserving map on the whole domain and
$W_0(x,y)=W_S(g(x),g(y))$ for almost every pair $(x,y)\in P_0\times\overline{(Q_0\cup R_0)}$.

The values of the functions $g_A,\ldots,g_G$ can be understood to be the coordinates of the vertices in $A_0,\ldots,G_0$, respectively, and
the coordinate of a vertex $x\in A_0\cup\cdots\cup G_0$ is
$$g_X(x)=13\int\limits_{P_0}W_0(x,z)\dd z$$
for $X\in\{A,\dots,G\}$.
This integral is easily expressible as a decorated density expression
since it is just the relative edge density (degree) of $x$ to $P_0$.
This view allows us to speak about segments and subsegments of the parts $A_0,\ldots,G_0$.
The $k$-th segment of $X_0$, $X\in\{A,\ldots,G\}$, is formed by those $x\in X_0$ such that $[g_X(x)]_1=k$.
Analogously, the values $[g_X(x)]_2$ determine the subsegments.

\subsection{Segmenting}
\label{sub-segments}

We now force that the parts $A_0,\ldots,G_0$ of $W_0$ are split into segments as in $W_S$.
We also force the structure to recognize the first segment through the clique inside $D_0^2$ and
to have the ``successor'' relation on the segments through the structure inside $C_0\times D_0$.
All three of these aims will be achieved by the decorated constraints given in Figure~\ref{fig-segments};
the arguments follows those given in~\cite{bib-inf,bib-comp}.

\begin{figure}[htbp]
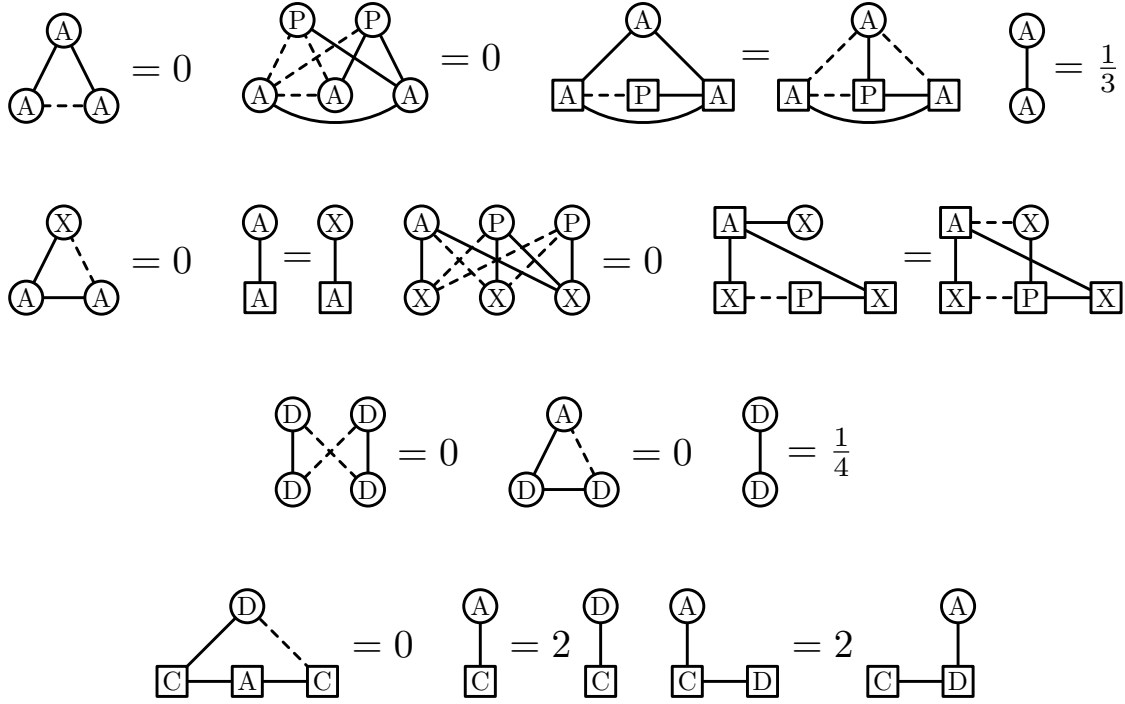

\begin{center}
\epsfbox{svejk.5} \hskip 5mm
\epsfbox{svejk.6} \hskip 5mm
\epsfbox{svejk.7} \hskip 5mm
\epsfbox{svejk.8} \vskip 10mm
\epsfbox{svejk.9} \hskip 5mm
\epsfbox{svejk.10} \hskip 5mm
\epsfbox{svejk.11} \hskip 5mm
\epsfbox{svejk.12} \vskip 10mm
\epsfbox{svejk.13} \hskip 5mm
\epsfbox{svejk.14} \hskip 5mm
\epsfbox{svejk.15} \vskip 10mm
\epsfbox{svejk.16} \hskip 5mm
\epsfbox{svejk.17} \hskip 5mm
\epsfbox{svejk.18} 
\end{center}
\caption{Decorated constraints used in Subsection~\ref{sub-segments}
         where $X\in\{B,C,D,E,F,G\}$.}
	 \label{fig-segments}
\end{figure}

The four constraints on the first line in Figure~\ref{fig-segments} force the structure on $A_0^2$.
The first constraint implies that there exists a set $\JJ$ of disjoint measurable subsets of $A_0$ such that
for almost every $x\in A_0$, there exists $J\in\JJ$ such that $W_0(x,y)=1$ for almost every $x,y\in J$ and
$W_0(x,y)=0$ for almost every $x\in J$ and $y\not\in J$. 
Hence, $W_0(x,y)=1$ for almost every $(x,y)\in\bigcup_{J\in\JJ}J^2$ and
$W_0(x,y)=0$ for almost every $(x,y)\in A_0^2\setminus\bigcup_{J\in\JJ}J^2$.

We claim that the second constraint together with the structure on $A_0\times P_0$ yields that
for every set $J\in\JJ$ there exists an open interval $J'\subseteq [0,1)$ such that $J$ and $g_A^{-1}(J')$ differ on a set of measure zero.
Note that such an open interval $J'$ might be empty.
Since we use an argument of this kind for the first time in this paper, we give more details.
If one of the (non-null) sets $J$ did not have the property, then a random sampling of three points $x,x',x''\in J\subseteq A_0$ with $g_A(x)<g_A(x')<g_A(x'')$ would satisfy
$W_0(x,x')=0$ and $W_0(x,x'')=1$ with positive probability.
For such three points, the probability of sampling the additional two points from $P_0$ is $g_A(x')-g_A(x)$ and $g_A(x'')-g_A(x')$ and
the triples of points $x,x',x''$ such that
the differences $g_A(x')-g_A(x)$ and $g_A(x'')-g_A(x')$ would be bounded away from zero have positive measure. 
Let $\JJ'$ be the set of open intervals $J'\subseteq [0,1)$ such that $g_A^{-1}\left(J'\right)$ and $J$ differ on a set of measure zero for some $J\in\JJ$.
Since the sets in $\JJ$ are disjoint and the sets in $\JJ'$ are open, the intervals of $\JJ'$ are also disjoint.

The third constraint implies that if $x,x'\in g_A^{-1}\left(J'\right)$ for some $J'\in\JJ'$,
then the measure of the interval $J'$, assuming it is non-empty, and the measure of the interval $\left(\sup J',1\right)$ are the same.
Again, we provide a detailed justification since we use an argument of this kind for the first time.
Almost every choice of the three roots $x\in A_0$, $x'\in P_0$ and $x''\in A_0$ (the order follows that in the figure)
satisfies that $g_A(x)<g_P\left(x'\right)<g_A\left(x''\right)$ (because of the non-edge between $x$ and $x'$ and the edge between $x'$ and $x''$) and that
there exists $J'\in\JJ'$ such that $x,x''\in g_A^{-1}\left(J'\right)$ (because of the edge between $x$ and $x''$).
The left side is then equal to the measure of $g_A^{-1}\left(J'\right)$, which is the measure of $J'$.
The right side is equal to the measure of those $z\in A_0$ such that $z\not\in g_A^{-1}\left(J'\right)$ and $g_A(z)>g_P\left(x'\right)$.
Hence, the right side is equal to $1-\sup J'$.
Since this holds for almost every triple $x$, $x'$ and $x''$,
we conclude that the measure of each non-empty interval $J'\in\JJ'$ is $1-\sup J'$.
Consequently,
each non-empty interval $J'$ must be of the form $(1-2\alpha,1-\alpha)$ for some $\alpha\in [0,1)$.
Since the intervals of $\JJ'$ are disjoint,
there can only be a finite number of intervals to the left of each interval of $\JJ'$.
This implies that the set $\JJ'$ is countable.

Finally, the last constraint on the first line yields that
$$\int\limits_{A_0^2}W(x,y)\dd x\dd y=\sum_{J'\in\JJ'}\left(\sup J'-\inf J'\right)^2=\frac{1}{3}\;\mbox{.}$$
However, this equality can hold only if the intervals contained in $\JJ'$ are exactly the intervals $\left(1-2^{1-k},1-2^{-k}\right)$, $k\in\NN$.
We conclude that $W_0(x,y)$ and $W_S(g(x),g(y))$ are equal for almost every pair $(x,y)\in A_0^2$.

We now analyze the four constraints on the second line in Figure~\ref{fig-segments}.
Fix $X\in\{B,\ldots,G\}$.
The first constraint implies that for every $J\in\JJ$, there exists $Z_J\subseteq X_0$ such that
$W_0(x,y)=1$ for almost every $(x,y)\in J\times Z_J$ and $W_0(x,y)=0$ for almost every $(x,y)\in J\times (X_0\setminus Z_J)$.
The second constraint yields that the measure of $Z_J$ is the same as the measure of $J$.
The third constraint implies that there exists an open interval $Z'_J$ such that $Z_J$ and $g_X^{-1}\left(Z'_J\right)$ differ on a set of measure zero.
Finally, the last constraint on the second line yields that each of the intervals $Z'_J$ is of the form $(1-2\alpha,1-\alpha)$ for some $\alpha\in [0,1)$.
Since the length of $Z'_J$ is the same as the measure of $J$, we conclude that
if $J=g_A^{-1}((1-2^{1-k},1-2^{-k}))$, then $Z'_J=(1-2^{1-k},1-2^{-k})$.
Hence, $W_0(x,y)$ and $W_S(g(x),g(y))$ are equal for almost every pair $(x,y)\in A_0\times X_0$, $X\in\{B,\ldots,G\}$.

Let us turn our attention to the three constraints on the third line in Figure~\ref{fig-segments}.
The first constraint implies that there exists a subset $Z_D$ of $D_0$ such that
$W_0(x,y)=1$ for almost every $(x,y)\in Z_D^2$ and
$W_0(x,y)=0$ for almost every $(x,y)\in D_0^2\setminus Z_D^2$.
The second constraint yields that $Z_D$ is a subset of $Z_J$ for some $J\in\JJ$.
Finally, the third constraint says that the square of the measure of $Z_D$ is $1/4$, i.e.~the measure of $Z_D$ is $1/2$.
However, this is only possible if $Z_D$ and $g_D^{-1}((0,1/2))$ differ on a set of measure zero.
We conclude that $W_0(x,y)$ and $W_S(g(x),g(y))$ are equal for almost every pair $(x,y)\in D_0^2$.

It remains to analyze the three constraints on the last line in Figure~\ref{fig-segments}.
The first constraint yields that for every $k\in\NN$,
there exists $Z_k\subseteq D_0$ such that $W(x,y)=1$ for almost every $(x,y)\in g_C^{-1}((1-2^{1-k},1-2^{-k}))\times Z_k$ and
$W(x,y)=0$ for almost every $(x,y)\in g_C^{-1}((1-2^{1-k},1-2^{-k}))\times (D_0\setminus Z_k)$.
The second constraint yields that the measure of $Z_k$ is $2^{-k-1}$, and
the third constraint that $Z_k$ is a subset of $g_D^{-1}((1-2^{-k},1-2^{-k-1}))$ except for a set of measure zero.
Hence, $Z_k$ and $g_D^{-1}((1-2^{-k},1-2^{-k-1}))$ differ on a set of measure zero, and
we can conclude that $W_0(x,y)$ and $W_S(g(x),g(y))$ are equal for almost every pair $(x,y)\in C_0\times D_0$.

\subsection{Tower function}
\label{sub-tower}

In this subsection, we will force a representation of the tower inside $B_0\times D_0$.
This is achieved using the constraints depicted in Figure~\ref{fig-tower}.
Before analyzing these constraints,
we give an analytic observation based on~\cite[proof of Lemma 3.3]{bib-lovasz11+}.

\begin{lemma}
\label{lm-square}
Let $F:[0,1)^2\to [0,1)$ be a measurable function.
If
$$\int\limits_{[0,1)} F(x,z)F(y,z)\dd z=\xi$$
for almost every $(x,y)\in [0,1)^2$, then
$$\int\limits_{[0,1)} F(x,z)^2\dd z=\xi$$
for almost every $x\in [0,1)$.
\end{lemma}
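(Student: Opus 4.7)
The plan is to prove that $g(x) := \int_{[0,1)} F(x,z)^2\dd z$ satisfies both $g(x) \ge \xi$ almost everywhere and $\int g(x)\dd x = \xi$; these two facts together force $g(x) = \xi$ a.e. To this end, set $\bar F(z) := \int_{[0,1)} F(x,z)\dd x$. Integrating the hypothesis first in $x$ gives $\int \bar F(z) F(y,z)\dd z = \xi$ for almost every $y$, and a further integration in $y$ yields $\int \bar F(z)^2 \dd z = \xi$. By the Cauchy-Schwarz inequality,
\[
\xi \;=\; \int \bar F(z) F(y,z)\dd z \;\le\; \|\bar F\|_2 \cdot \|F(y,\cdot)\|_2 \;=\; \sqrt{\xi \cdot g(y)}\,,
\]
and squaring gives $g(y) \ge \xi$ for almost every $y$.

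For the matching bound on the average, I would view the hypothesis as the statement that the Hilbert-Schmidt integral operator $T := FF^*$ on $L^2([0,1))$ has kernel equal to the constant $\xi$ almost everywhere. Such a $T$ acts as $T\psi(x) = \xi\int_{[0,1)} \psi(y)\dd y$, so it is a positive rank-one operator whose only nonzero eigenvalue is $\xi$, giving $\operatorname{tr}(T) = \xi$. The standard Hilbert-Schmidt trace identity $\operatorname{tr}(FF^*) = \|F\|_{\mathrm{HS}}^2 = \int\int F(x,z)^2 \dd x\dd z$ (valid since $F$ is bounded and thus certainly Hilbert-Schmidt) then gives $\int g(x)\dd x = \xi$, which together with the a.e.\ lower bound $g \ge \xi$ forces $g(x) = \xi$ a.e., as required.

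The main obstacle is justifying the trace identity cleanly; a fully elementary substitute decomposes $F(x,z) = \bar F(z) + F'(x,z)$ with $\int F'(x,z)\dd x = 0$ for a.e.\ $z$, expands the hypothesis and cancels cross-terms to obtain $\int F'(x,z) F'(y,z)\dd z = 0$ for a.e.\ $(x,y)$, and then deduces $F' \equiv 0$ a.e.\ by integrating against $\chi_E(x)\chi_E(y)$ to get $\int \left(\int_E F'(x,z)\dd x\right)^2 \dd z = 0$ for every measurable $E\subseteq[0,1)$, specialising to the countable family $E = [0,q)$ with $q \in \mathbb{Q}\cap [0,1]$, using boundedness of $F'$ for continuity in $q$, and applying Lebesgue differentiation. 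Either route collapses at the same final step: $g(x) = \xi$ almost everywhere.
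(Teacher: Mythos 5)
The paper itself contains no proof of this lemma: it is presented as an analytic observation, and the reader is referred to the proof of Lemma~3.3 in the cited Lov\'asz--Szegedy paper, so there is no internal argument to compare against. Judged on its own, your proof is correct. The integration/Cauchy--Schwarz step gives $\int\bar F^2=\xi$ and then $g(y)\ge\xi$ for a.e.\ $y$ (you divide by $\xi$ after squaring, so you implicitly need $\xi>0$ here; but the case $\xi=0$ is absorbed automatically because $g\ge 0=\xi$ trivially). The hypothesis makes $FF^*$ the rank-one operator $\psi\mapsto\xi\scalar{\psi}{\mathbf 1}\mathbf 1$, of trace $\xi$, and the standard Hilbert--Schmidt identity $\operatorname{tr}(FF^*)=\|F\|_{\mathrm{HS}}^2$ then gives $\int g=\xi$; together with the pointwise lower bound this forces $g\equiv\xi$ a.e. Your elementary alternative is also sound and in fact yields a slightly stronger statement: after cancelling the cross-terms and running the $\chi_E$/Lebesgue-differentiation argument, one gets that the fluctuation $F'(x,z)=F(x,z)-\bar F(z)$ vanishes a.e., i.e.\ $F$ is essentially independent of its first argument, from which $g(x)=\int\bar F^2=\xi$ is immediate. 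Either route is a legitimate replacement for the omitted proof; the elementary one has the merit of staying within the measure-theoretic and Cauchy--Schwarz toolkit used everywhere else in the paper, rather than invoking trace-class operator theory.
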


\begin{figure}[htbp]
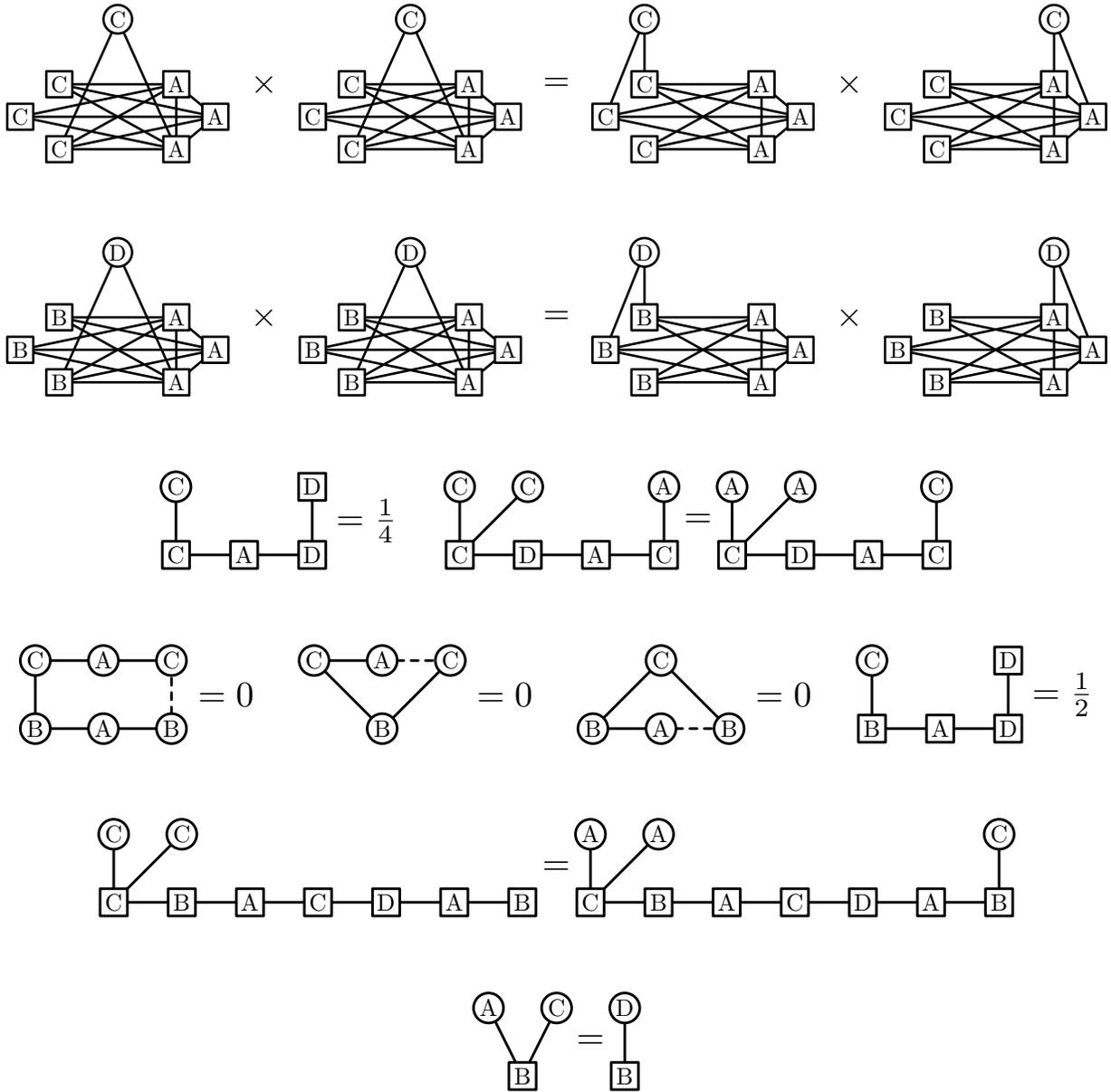

\begin{center}
\epsfxsize 16.2cm
\epsfbox{svejk.19} \vskip 10mm
\epsfxsize 16.2cm
\epsfbox{svejk.27} \vskip 10mm
\epsfbox{svejk.20} \hskip 5mm
\epsfbox{svejk.21} \vskip 10mm
\epsfbox{svejk.22} \hskip 5mm
\epsfbox{svejk.23} \hskip 5mm
\epsfbox{svejk.24} \hskip 5mm
\epsfbox{svejk.25} \vskip 10mm
\epsfbox{svejk.26} \vskip 10mm
\epsfbox{svejk.28}
\end{center}
\caption{Decorated constraints used in Subsection~\ref{sub-tower}.}
\label{fig-tower}
\end{figure}

The constraint on the first line in Figure~\ref{fig-tower} yields that
$$\left(\int\limits_{C_0} W_0(x,z)W_0(y,z)\dd z\right)^2=\left(\int\limits_{C_0} W_0(x',z)W_0(x'',z) \dd z\right) \left(\int\limits_{C_0} W_0(y',z)W_0(y'',z) \dd z\right)$$
for almost every $x,x',x''\in C_0$ and $y,y',y''\in A_0$ all in the same segment (this is implied by the presence of the edges between the roots). 
By Lemma~\ref{lm-square}, we get that
$$\left(\int\limits_{C_0} W_0(x,z)W_0(y,z)\dd z\right)^2=\left(\int\limits_{C_0} W_0(x',z)^2 \dd z\right) \left(\int\limits_{C_0} W_0(y',z)^2 \dd z\right)$$
for almost every $x,x'\in C_0$ and $y,y'\in A_0$ such that $[g_C(x)]_1=[g_A(y)]_1$.
Since the equality holds for almost every $x,x'\in C_0$ and $y,y'\in A_0$, it actually holds that
$$\left(\int\limits_{C_0} W_0(x,z)W_0(y,z)\dd z\right)^2=\left(\int\limits_{C_0} W_0(x,z)^2 \dd z\right) \left(\int\limits_{C_0} W_0(y,z)^2 \dd z\right)$$
for almost every $x\in C_0$ and $y\in A_0$ such that $[g_C(x)]_1=[g_A(y)]_1$.
The Cauchy-Schwarz Inequality yields that for every $k\in\NN$ there exists $\xi_k\in\RR$ such that $W_0(x,z)=\xi_k\cdot W_0(y,z)$ 
for almost every $x\in C_0$, $y\in A_0$ and $z\in C_0$ with $[g_C(x)]_1=[g_A(y)]_1=k$.
Hence, $W_0(x,z)=\xi_k$ for almost every $x\in C_0$ and $z\in C_0$ with $[g_C(x)]_1=[g_C(z)]_1=k$ and
$W_0(x,z)=0$ for almost every $x\in C_0$ and $z\in C_0$ with $[g_C(x)]_1\not=[g_C(z)]_1$.
Along the same line, the constraint on the second line implies that
for every $k\in\NN$ there exists $\xi'_k\in\RR$ such that $W_0(x,z)=\xi'_k\cdot W_0(y,z)$
for almost every $x\in B_0$, $y\in A_0$ and $z\in D_0$ with $[g_B(x)]_1=[g_A(y)]_1=k$.
Consequently, $W_0(x,z)=\xi'_k$ for almost every $x\in D_0$ and $z\in B_0$ with $[g_D(x)]_1=[g_B(z)]_1=k$ and
$W_0(x,z)=0$ for almost every $x\in D_0$ and $z\in B_0$ with $[g_D(x)]_1\not=[g_B(z)]_1$.

Almost every choice of the roots in the first constraint on the third line satisfies that all the roots belong to the same segment and
this segment must be the first segment because of the edge between the two roots from $D_0$.
Hence, this constraint implies that $\xi_1|g_C^{-1}((0,1/2))|=1/4$, i.e.~$\xi_1=1/2$ as desired.

Let us now look at the second constraint. 
Almost every choice of the roots satisfies that if the right root from $C_0$ is in the $k$-th segment,
then the roots from $A_0$ and $D_0$ are also in the $k$-th segment and the left root from $C_0$ is in the $(k-1)$-th segment.
Since for every $k$ the choice of such roots has positive probability,
the constraint implies that the following holds for every $k\in\NN$:
$$\left(\xi_k\left|g_C^{-1}\left(\left(1-2^{-k+1},1-2^{-k}\right)\right)\right|\right)^2\left|g_A^{-1}\left(\left(1-2^{-k},1-2^{-k-1}\right)\right)\right|=$$
$$\left(\left|g_A^{-1}\left(\left(1-2^{-k+1},1-2^{-k}\right)\right)\right|\right)^2\xi_{k+1}\left|g_C^{-1}\left(\left(1-2^{-k},1-2^{-k-1}\right)\right)\right|\;\mbox{.}$$
Hence, it holds that $\xi_{k+1}=\xi_k^2=2^{-2^{k-1}}$.
We conclude that $W_0(x,y)$ and $W_S(g(x),g(y))$ are equal for almost every pair $(x,y)\in C_0\times C_0$.

The first three constraints on the fourth line in Figure~\ref{fig-tower} yield that for every $k\in\NN$
either $W(x,y)=0$ for almost every $x\in B_0$ in the $k$-th segment and almost every $y\in C_0$ or
there exists $m_k$ such that $W(x,y)=1$ for almost every $x\in B_0$ in the $k$-th segment and almost every $y\in C_0$ in the $m_k$-th segment and
$W(x,y)=0$ for almost every $y\in C_0$ not in the $m_k$-th segment.
The last constraint on the fourth line yields that $m_1=1$.

We now show that the constraint on the fifth line implies that $m_k$ exists and $m_k=t(k-1)$ for every $k\in\NN$.
For almost every choice of the roots in the constraint on the fifth line,
if the right root from $B_0$ belongs to the $k$-th segment,
the left root from $B_0$ belongs to the $(k-1)$-th segment and
the left root from $C_0$ belongs to the $m_{k-1}$-th segment.
We derive that this constraint implies that
$$\left(2^{-2^{m_{k-1}-1}}\cdot 2^{-m_{k-1}}\right)^2=
  \left(2^{-m_{k-1}}\right)^2\cdot 2^{-m_k}\;\mbox{.}$$
We conclude that $m_k=2^{m_{k-1}}$ and so $m_k=t(k-1)$.
Consequently, $W_0(x,y)$ and $W_S(g(x),g(y))$ are equal for almost every pair $(x,y)\in B_0\times C_0$.

The constraint on the last line in Figure~\ref{fig-tower} yields by considering a choice of the root in the $k$-th segment of $B_0$ that
$$2^{-k}\cdot 2^{-m_k} = \xi'_k\cdot 2^{-k} \;\mbox{.}$$
We conclude that $\xi'_k=2^{-m_k}=2^{-t(k-1)}=t(k)^{-1}$.
Hence, $W_0(x,y)$ and $W_S(g(x),g(y))$ are equal for almost every pair $(x,y)\in B_0\times D_0$.

\subsection{Subsegmenting}
\label{sub-subsegments}

We now force the parts of the graphon that further structure the segments, e.g., provide the structure of subsegments;
these are the parts $B_0\times (B_0\cup E_0\cup F_0\cup G_0)$, $F_0\times (D_0\cup F_0\cup G_0)$ and $G_0^2$.
Some of the arguments are analogous to those presented earlier in Subsection~\ref{sub-segments}.
In analogy to Subsection~\ref{sub-segments},
the first two constraints in Figure~\ref{fig-subsegments-1} for $X=B$ yield that
there exists a set $\JJ$ of disjoint open subintervals of $[0,1)$ such that
$W_0(x,y)=1$ for almost every $(x,y)\in g_B^{-1}(J)^2$ for some $J\in\JJ$ and $W_0(x,y)=0$ for almost all other pairs $(x,y)\in B_0^2$.
The third constraint implies that
every set $g_B^{-1}(J)$ is a subset of $g_A^{-1}((1-2^{-k+1},1-2^{-k}))$ except for a set of measure zero for some $k\in\NN$.
Hence, each interval $J\in\JJ$ is a subinterval of $(1-2^{-k+1},1-2^{-k})$ for some $k\in\NN$.
The fourth constraint with $X=B$ yields that the length of each interval $J$ is $2^{-k}t(k)^{-1}$.
Finally, the first constraint on the second line can hold
only if each interval $(1-2^{-k+1},1-2^{-k})$ contains $t(k)$ such intervals $J$.
We conclude that $W_0(x,y)$ and $W_S(g(x),g(y))$ are equal for almost every pair $(x,y)\in B_0^2$.

\begin{figure}[htbp]
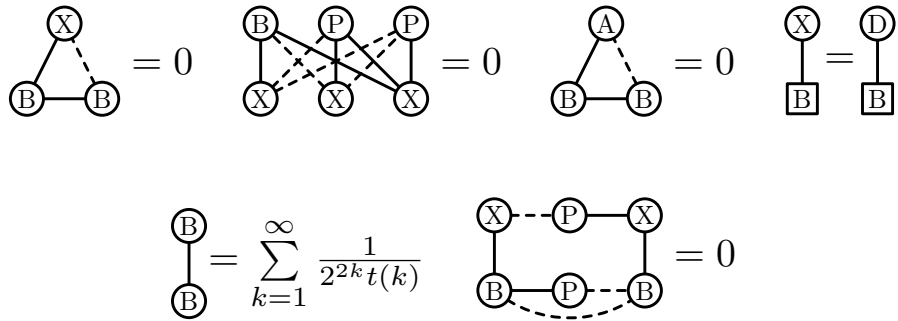

\begin{center}
\epsfbox{svejk.32} \hskip 5mm
\epsfbox{svejk.29} \hskip 5mm
\epsfbox{svejk.80} \hskip 5mm
\epsfbox{svejk.30} \vskip 10mm
\epsfbox{svejk.31} \hskip 5mm
\epsfbox{svejk.33}
\end{center}
\caption{The first set of decorated constraints used in Subsection~\ref{sub-subsegments}
         where $X\in\{B,E,F,G\}$.}
	 \label{fig-subsegments-1}
\end{figure}

For $X\in\{E,F,G\}$, the first, second and fourth constraints on the first line
yield that for each $J\in\JJ$ there exists an open interval $J'$ of the same length as $J$ such that
$W_0(x,y)=1$ for almost every $(x,y)\in J\times J'$ and
$W_0(x,y)=0$ for almost every $(x,y)\in J\times \left(X_0\setminus J'\right)$.
The last constraint on the second line in Figure~\ref{fig-subsegments-1} gives that
the intervals $J'$ follow in the same order as the intervals $J$.
Hence, $W_0(x,y)$ and $W_S(g(x),g(y))$ are equal for almost every pair $(x,y)\in B_0\times (E_0\cup F_0\cup G_0)$.

\begin{figure}[htbp]
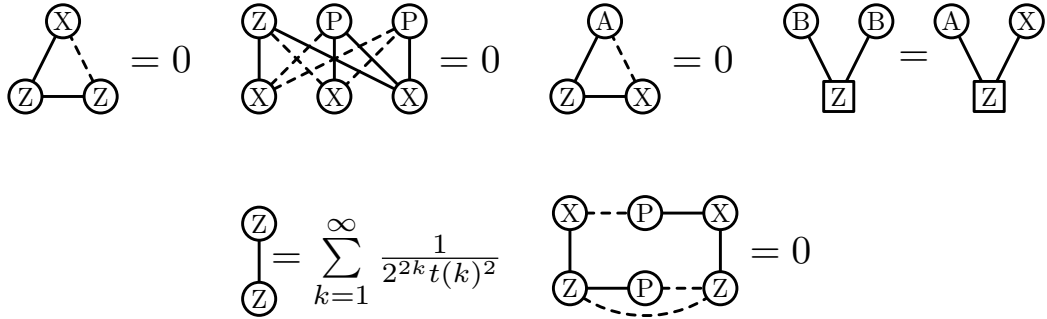

\begin{center}
\epsfbox{svejk.37} \hskip 5mm
\epsfbox{svejk.34} \hskip 5mm
\epsfbox{svejk.81} \hskip 5mm
\epsfbox{svejk.35} \vskip 10mm
\epsfbox{svejk.36} \hskip 5mm
\epsfbox{svejk.38}
\end{center}
\caption{The second set of decorated constraints used in Subsection~\ref{sub-subsegments}
         where $(Z,X)\in\{(F,F), (F,D), (G,G) \}$.}
	 \label{fig-subsegments-2}
\end{figure}

The set of constraints in Figure~\ref{fig-subsegments-2} is analogous to those in Figure~\ref{fig-subsegments-1}.
The main difference is the fourth constraint, which forces that
if an interval $J$ from the set $\JJ'$ corresponding to $F_0^2$ or $G_0^2$
is a subinterval of an interval $(1-2^{-k+1},1-2^{-k})$,
then $\left(2^{-k}t(k)^{-1}\right)^2=2^{-k}\cdot|J|$.
Hence, the length of such an interval $J$ must be $2^{-k}t(k)^{-2}$.
The first constraint on the second line then forces that the interval $(1-2^{-k+1},1-2^{-k})$ must contain $t(k)^2$ such intervals $J$ and
the order of the corresponding pairs of intervals is forced by the last constraint.
We can now conclude that
$W_0(x,y)$ and $W_S(g(x),g(y))$ are equal for almost every pair $(x,y)\in F_0^2\cup G_0^2\cup (F_0\times D_0)$.

\begin{figure}[htbp]
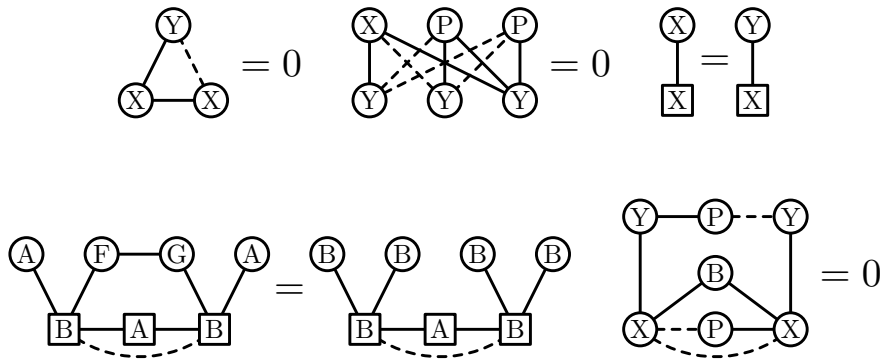

\begin{center}
\epsfbox{svejk.40} \hskip 5mm
\epsfbox{svejk.39} \hskip 5mm
\epsfbox{svejk.41} \vskip 10mm
\epsfbox{svejk.42} \hskip 5mm
\epsfbox{svejk.43}
\end{center}
\caption{The third set of decorated constraints used in Subsection~\ref{sub-subsegments}
         where $(X,Y)\in\{(F,G), (G,F)\}$.}
	 \label{fig-subsegments-3}
\end{figure}

We now analyze the constraints from Figure~\ref{fig-subsegments-3}.
As in the previous cases, the three constraints on the first line force that
each interval $(1-2^{-k+1},1-2^{-k})$, $k\in\NN$, contains disjoint open intervals $I_1,\ldots,I_{t(k)^2}$ and $J_1,\ldots,J_{t(k)^2}$,
each of length $2^{-k}t(k)^{-2}$, such that
$W_0(x,y)=1$ for almost every $(x,y)\in g_F^{-1}(I_i)\times g_F^{-1}(J_i)$ and
$W_0(x,y)=0$ for almost every $(x,y)\in g_F^{-1}(I_i)\times (F_0\setminus g_F^{-1}(J_i))$.

Fix a choice of the roots in the first constraint on the second line in Figure~\ref{fig-subsegments-3};
in almost every choice of the roots, all the three roots belong to the same segment.
Suppose that they belong to the $k$-th segment.
The right side is equal to $\left(2^{-k}t(k)^{-1}\right)^4$ for almost all choices of the roots (since the structure of $B_0^2$ has already been forced) and
the left side is equal to $2^{-2k}\cdot\left(2^{-k}t(k)^{-2}\right)^2$ multiplied by the number of choices of $I_i$ and $J_i$ such that
$I_i$ is contained in the subsegment of the left root and $J_i$ in the subsegment of the right root.
Since the left side and the right side must be equal for almost all choices of the roots,
we conclude that for any pair of subsegments $S$ and $S'$ of the $k$-th segment
there exists a unique index $i$ such that $I_i$ is contained in $S$ and $J_i$ in $S'$.
The last constraint in Figure~\ref{fig-subsegments-3} enforces that
for any fixed subsegment $S$ of the $k$-th segment and any two subsegments $S'$ and $S''$ such that $S'$ precedes $S''$,
the pairs $I_i\times J_i\subseteq S\times S'$ and $I_{i'}\times J_{i'}\subseteq S\times S''$ satisfy that
the interval $I_i$ precedes the interval $I_{i'}$.
This implies that $W_0(x,y)$ and $W_S(g(x),g(y))$ are equal for almost every pair $(x,y)\in F_0\times G_0$.

\subsection{Binary expansions}
\label{sub-binary}

In this section, we force the structure of the graphon inside $E_0\times D_0$, $E_0\times C_0$, $G_0\times C_0$ and $F_0\times C_0$.
This will be achieved using the constraints depicted in Figure~\ref{fig-binary}.

\begin{figure}[htbp]
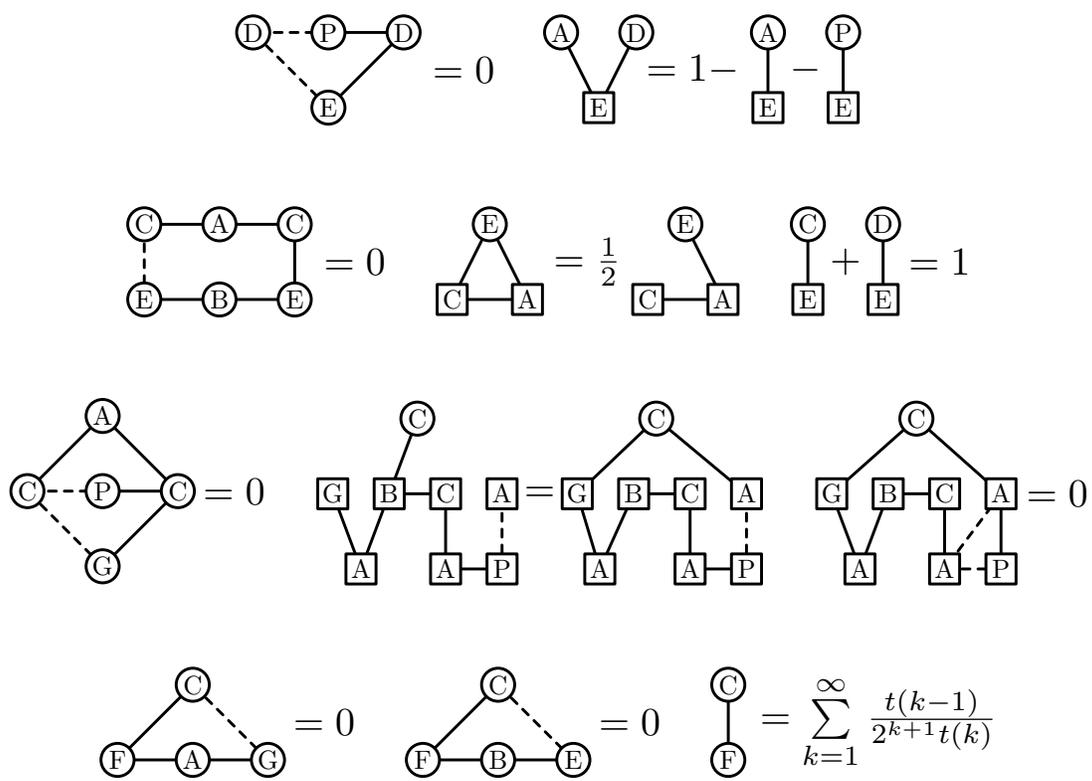

\begin{center}
\epsfbox{svejk.44} \hskip 5mm
\epsfbox{svejk.45} \vskip 10mm
\epsfbox{svejk.46} \hskip 5mm
\epsfbox{svejk.47} \hskip 5mm
\epsfbox{svejk.48} \vskip 10mm
\epsfbox{svejk.49} \hskip 5mm
\epsfbox{svejk.50} \hskip 5mm
\epsfbox{svejk.51} \vskip 10mm
\epsfbox{svejk.52} \hskip 5mm
\epsfbox{svejk.53} \hskip 5mm
\epsfbox{svejk.54}
\end{center}
\caption{The decorated constraints used in Subsection~\ref{sub-binary}.}
\label{fig-binary}
\end{figure}

The first constraint on the first line in the figure causes that for almost every $x\in E_0$, there exists $\xi_x$ such that
$W_0(x,y)=1$ for almost every $y\in D_0$ with $g_D(y)\le\xi_x$ and $W_0(x,y)$ for almost every $y\in D_0$ with $g_D(y)>\xi_x$.
The second constraint on the line causes that for almost every $x\in E_0$,
it holds that $2^{-[g_E(x)]_1} \xi_x = 1-2^{-[g_E(x)]_1}-g_E(x)$.
It follows that
$$\xi_x=\frac{1-2^{-[g_E(x)]_1}-g_E(x)}{2^{-[g_E(x)]_1}}=1-\llbracket g_E(x)\rrbracket_1$$
for almost every $x\in E_0$.
We conclude that $W_0(x,y)$ and $W_S(g(x),g(y))$ are equal for almost every pair $(x,y)\in E_0\times D_0$.

The first constraint on the second line forces that
$W_0$ is almost everywhere $0$ or almost everywhere $1$
on each product of a subsegment of $E_0$ and a segment of $C_0$.
Fix a segment $S_E$ of $E_0$.
The second constraint forces that
for almost every $y\in C_0$
the measure of $x\in S_E$ such that $W_0(x,y)=1$ is exactly half of the measure of $S_E$.
Since the measure of $y\in C_0$ such that $W_0(x,y)=1$ is $1-\xi_x=[g_E(x)]_1$ for almost every $x\in E_0$ because of the third constraint on the second line,
the choice of the segments and subsegments where $W_0$ is one almost everywhere is unique.
So, we get that $W_0(x,y)$ and $W_S(g(x),g(y))$ are equal for almost every pair $(x,y)\in E_0\times C_0$.

The first constraint on the third line implies that for almost every $x\in G_0$ 
there exist $\xi_{x,k}$, $k\in\NN$, such that
$W_0(x,y)=1$ for almost every $y$ with $[g_C(y)]_1=k$ and $\llbracket g_C(y)\rrbracket_1\le\xi_{x,k}$ and
$W_0(x,y)=0$ for almost every other $y\in C_0$.
Consider a possible choice of the roots in the second constraint on the third line.
For almost every such choice of the roots,
if the leftmost root from $A_0$ lies in the $k$-th segment,
then the middle root from $A_0$ is in the $t(k-1)$-th segment (because of the already
enforced structure of $W_0$ on $B_0\times C_0$ in particular) and
the rightmost root from $A_0$ is in the $\ell$-th segment where $\ell\le t(k-1)$.
The left side of the constraint is equal to $2^{-t(k-1)}=t(k)^{-1}$.
The right side of the constraint is equal to $\xi_{x,\ell}2^{-\ell}$.
This implies that $\xi_{x,\ell}=2^\ell/t(k)$ where for almost every $x$ from the $k$-th segment of $G_0$ and $\ell\le t(k-1)$.
Finally, almost every choice of the roots in the last constraint on the third line satisfies that
if the leftmost root from $A_0$ lies in the $k$-th segment,
then the middle root from $A_0$ is in the $t(k-1)$-th segment and
the rightmost root from $A_0$ is in the $\ell$-th segment where $\ell>t(k-1)$.
Hence, $\xi_{x,\ell}=0$ for almost every $x$ from the $k$-th segment of $G_0$ and $\ell>t(k-1)$.
We conclude that $W_0(x,y)$ and $W_S(g(x),g(y))$ are equal for almost every pair $(x,y)\in G_0\times C_0$.

The first constraint on the fourth line implies that for almost every $y\in C_0$,
if the measure of $x$ with $W_0(x,y)>0$ from the $k$-th segment of $F_0$ is positive,
then $W_0(x,y)=1$ for almost every $x$ from the $k$-th segment of $G_0$.
Analogously, the second constraint yields that  for almost every $y\in C_0$,
if the measure of $x$ with $W_0(x,y)>0$ from a certain subsegment of $F_0$ is positive,
then $W_0(x,y)=1$ for almost every $x$ from the corresponding subsegment of $G_0$.
Consequently, $W_0(x,y)=0$ for almost every pair $(x,y)\in F_0\times C_0$ such that $W_S(g(x),g(y)=0$.
Since the last constraint implies that
the integral of $W_0$ over $F_0\times C_0$ is the same as the integral of $W_S$ over $F\times C$,
it holds that $W_0(x,y)$ and $W_S(g(x),g(y))$ are equal for almost every pair $(x,y)\in F_0\times C_0$.

\subsection{Linear transformation}
\label{sub-linear}

In this subsection, we focus on the pair $G_0$ and $E_0$ of the parts.
The first constraint in Figure~\ref{fig-linear} yields that
$W_0(x,y)=0$ for almost every $(x,y)\in G_0\times E_0$ such that the segments of $[g_G(x)]_1\not=[g_E(y)]_1$,
i.e.~the segments of $x$ and $y$ are different.
The second constraint implies that for almost every $x\in G_0$
there exists $\xi_x$ such that
$W_0(x,y)=1$ for almost every $y\in E_0$ such that $[g_G(x)]_1=[g_E(y)]_1$ and $\llbracket g_E(y)\rrbracket_1\ge\xi_x$ and
$W_0(x,y)=0$ for almost every $y\in E_0$ such that $[g_G(x)]_1=[g_E(y)]_1$ and $\llbracket g_E(y)\rrbracket_1<\xi_x$.
The third constraint implies that
almost every pair of $x$ and $x'$ from the same segment of $G_0$ such that $g_G(x)<g_G(x')$
satisfies that $\xi_x\ge\xi_{x'}$.
In order to show that $W_0(x,y)$ and $W_S(g(x),g(y))$ are equal for almost every pair $(x,y)\in G_0\times E_0$,
it is enough to show that
\begin{equation}
\xi_x=\frac{1}{2}+t([g_G(x)]_1-1)^{1/2}\left(\frac{1}{2}-\llbracket g_G(x)\rrbracket_1\right)\label{eq-linear}
\end{equation}
for almost every $x\in G_0$.

\begin{figure}[htbp]
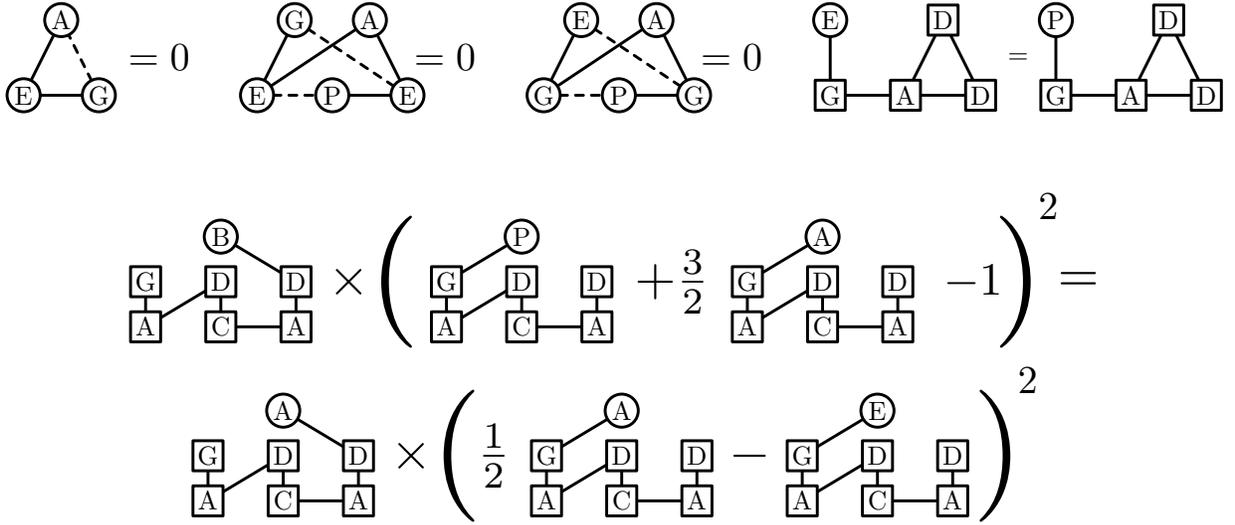

\begin{center}
\epsfbox{svejk.56} \hskip 5mm
\epsfbox{svejk.55} \hskip 5mm
\epsfbox{svejk.60} \hskip 5mm
\epsfbox{svejk.57} \vskip 10mm
\epsfbox{svejk.58} \vskip 2mm
\epsfbox{svejk.59}
\end{center}
\caption{The decorated constraints used in Subsection~\ref{sub-linear}. Note that one of the constraints is on both the second and third lines.}
\label{fig-linear}
\end{figure}

Almost every choice of the roots in the last constraint on the first line in Figure~\ref{fig-linear}
satisfies that the root from $G$ is from the first segment. Hence, this constraint implies that
almost every $x\in G_0$ with $[g_G(x)]_1=1$ satisfies that $\frac{1-\xi_x}{2}=g_G(x)$.
Since $t(0)=1$ and $\llbracket g_G(x)\rrbracket_1=2g_G(x)$ for such $x\in G_0$,
we obtain that (\ref{eq-linear}) holds for almost every $x$ from the first segment of $G_0$.

We now analyze the constraint on the second and third lines in Figure~\ref{fig-linear}.
Note that almost every choice of the roots satisfies that
if the left root from $A$ is in the $k$-th segment,
then the root from $G$ is also in the $k$-th segment and
the right root from $A$ and the right root from $D$ are in the $(k-1)$-th segment.
In particular, $k\ge 2$ for almost every choice of the roots.
Rewriting the densities using the already established structure of the graphon, we obtain that
almost every $x\in G_0$ with $[g_G(x)]_1=k\ge 2$ satisfies that
$$\frac{2^{-(k-1)}}{t(k-1)}\left(g_G(x)+\frac{3}{2}\cdot 2^{-k}-1\right)^2=
  2^{-(k-1)}\left(\frac{1}{2}\cdot 2^{-k}-(1-\xi_x)2^{-k}\right)^2\;\mbox{.}$$
Since it holds that $\llbracket g_G(x)\rrbracket_1=2^k\cdot(g_G(x)-1+2^{-(k-1)})$,
we can rewrite the equation to obtain
$$\frac{2^{-(k-1)}}{t(k-1)}\left(2^{-k}\llbracket g_G(x)\rrbracket_1-\frac{1}{2}\cdot 2^{-k}\right)^2=
  2^{-(k-1)}\left(\frac{1}{2}\cdot 2^{-k}-(1-\xi_x)2^{-k}\right)^2\;\mbox{,}$$
which can be transformed to
$$\left(\llbracket g_G(x)\rrbracket_1-\frac{1}{2}\right)^2=
  t(k-1)\left(\xi_x-\frac{1}{2}\right)^2\;\mbox{.}$$
This implies that $\xi_x$ is equal to
$$\frac{1}{2}+t([g_G(x)]_1-1)^{1/2}\left(\frac{1}{2}-\llbracket g_G(x)\rrbracket_1\right)\mbox{ or }\;
  \frac{1}{2}-t([g_G(x)]_1-1)^{1/2}\left(\frac{1}{2}-\llbracket g_G(x)\rrbracket_1\right)$$
for almost every $x\in G_0$ not contained in the first segment of $G_0$.
Recall that almost every pair of $x$ and $x'$ from the same segment of $G_0$ such that $g_G(x)<g_G(x')$,
which is equivalent to $\llbracket g_G(x)\rrbracket_1<\llbracket g_G(x')\rrbracket_1$, satisfies that $\xi_x\ge\xi_{x'}$.
This implies that the latter of two options for the values $\xi_x$ holds for almost no $x\in G_0$ and
thus (\ref{eq-linear}) also holds for almost every $x\in G_0$ not contained in the first segment of $G_0$.
We conclude that $W_0(x,y)$ and $W_S(g(x),g(y))$ are equal for almost every pair $(x,y)\in G_0\times E_0$.

\subsection{Dot product}
\label{sub-dot}

We now enforce the structure inside the pairs $F_0\times E_0$ and $D_0\times G_0$.
The first constraint in Figure~\ref{fig-dot} yields that
$W_0(x,y)=0$ for almost every $(x,y)\in F_0\times E_0$ with $[g_F(x)]_1\not=[g_E(y)]_1$,
i.e.~the segments of $x$ and $y$ are different.
The second constraint implies that for almost every $x\in F_0$
there exists $\xi_x$ such that
$W_0(x,y)=1$ for almost every $y\in E_0$ with $[g_F(x)]_1=[g_E(y)]_1$ and $\llbracket g_E(y)\rrbracket_1\le\xi_x$ and
$W_0(x,y)=0$ for almost every $y\in E_0$ with $[g_F(x)]_1=[g_E(y)]_1$ and $\llbracket g_E(y)\rrbracket_1>\xi_x$.
In order to show that $W_0(x,y)$ and $W_S(g(x),g(y))$ are equal for almost every pair $(x,y)\in F_0\times E_0$,
it is enough to show that
\begin{equation}
\xi_x=\frac{1}{2} - \frac{\scalar{[ g_F(x)]_2^{\pm1}}{[g_F(x)]_{3}^{\pm1}}}{4t([g_F(x)]_1-1)} \label{eq-dot-1}
\end{equation}
for almost every $x\in F_0$.

\begin{figure}[htbp]
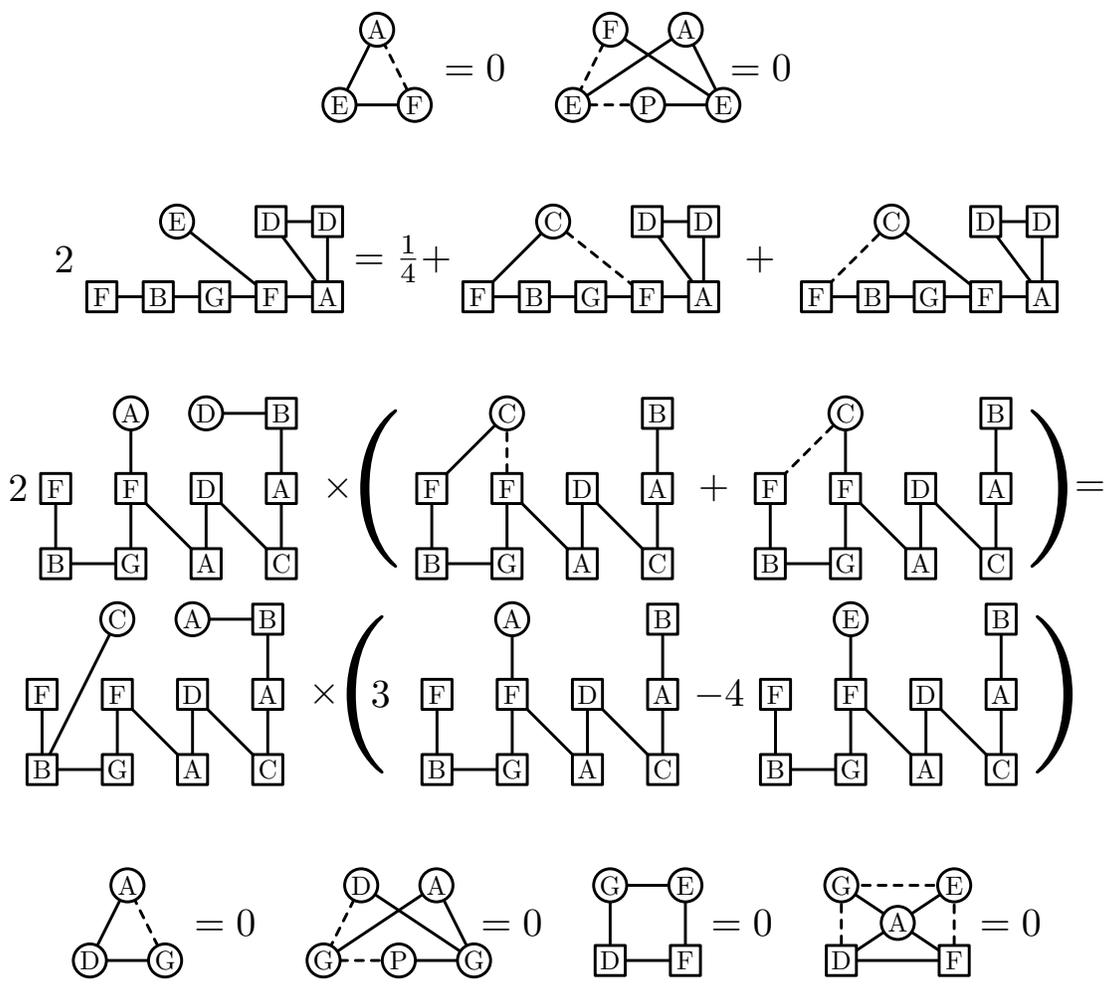

\begin{center}
\epsfbox{svejk.61} \hskip 5mm
\epsfbox{svejk.62} \vskip 10mm
\epsfbox{svejk.63} \vskip 10mm
\epsfbox{svejk.64} \vskip 2mm
\epsfbox{svejk.65} \vskip 10mm
\epsfbox{svejk.66} \hskip 5mm
\epsfbox{svejk.67} \hskip 5mm
\epsfbox{svejk.68} \hskip 5mm
\epsfbox{svejk.69}
\end{center}
\caption{The set of decorated constraints used in Subsection~\ref{sub-dot}.
         Note that one of the constraints is on both the third and fourth lines.}
\label{fig-dot}
\end{figure}

Let $x$ and $x'$ be two elements of $F_0$ from the same segment, say, the $k$-th segment.
By the structure on $F_0\times C_0$,
the measure of $y\in C_0$ such that either $W_0(x,y)=1$ and $W_0(x',y)=0$ or $W_0(x,y)=0$ and $W_0(x',y)=1$
is equal to the number of different pairs of coordinates in $\llbracket g_F(x)\rrbracket_{1}^{\pm1}$ and $\llbracket g_F(x')\rrbracket_{1}^{\pm1}$
multiplied by $t(k)^{-1}$ for almost any pair $x$ and $x'$.
This measure can be rewritten as
\begin{equation}
\frac{t(k-1) - \scalar{\llbracket g_F(x)\rrbracket_{1}^{\pm1}}{\llbracket g_F(x')\rrbracket_{1}^{\pm1}}}{2t(k)}\;\mbox{.}
\label{eq-dot-measure}
\end{equation}
Consider now the constraint on the second line in Figure~\ref{fig-dot}.
For almost every choice of the roots, their segments are the same and so
they all belong to the first segment of their parts (because of the edge between the two roots from $D_0$).
Let $x$ be the right root that belongs to $F_0$ and $x'$ the left one.
Because of the already enforced structure of the graphon,
the subsegment of $x'$ is $[g_F(x)]_{3}$ in almost every choice of the roots.
Using (\ref{eq-dot-measure}) with $t(0)=1$ and $t(1)=2$,
we derive that it holds for almost every $x\in F_0$ that belongs to the first segment that
$$2\cdot\frac{\xi_x}{2}=\frac{1}{4}+\frac{1-\scalar{[ g_F(x)]_2^{\pm1}}{[g_F(x)]_{3}^{\pm1}}}{4}\;\mbox{.}$$
We conclude that the equation (\ref{eq-dot-1}) holds for almost every $x\in F_0$ from the first segment.

We now consider the constraint on the third and fourth lines in Figure~\ref{fig-dot}.
In almost every choice of the roots, the index of the segment of the two roots from $F_0$
is one larger than that of the segment of the root from $A_0$ adjacent to the root from $C_0$.
Also note that for almost every $x\in F_0$ that does not belong to the first segment,
there is a set of positive measure of possible choices of the roots.
Let $x$ be the right root from $F_0$, $x'$ the left one and $k$ the common index of their segment.
The constraint implies for almost every choice of the roots that
$$2\cdot\frac{2^{-k}\cdot 2^{-(k-1)}}{t(k-1)}\cdot\frac{t(k-1)-\scalar{\llbracket g_F(x)\rrbracket_{1}^{\pm1}}{\llbracket g_F(x')\rrbracket_{1}^{\pm1}}}{2t(k)}=
  \frac{2^{-(k-1)}}{2^{t(k-1)}}\left(3\cdot 2^{-k}-4\xi_x\cdot 2^{-k}\right)\;\mbox{.}$$
This expression readily transforms to
$$1-\frac{\scalar{\llbracket g_F(x)\rrbracket_{1}^{\pm1}}{\llbracket g_F(x')\rrbracket_{1}^{\pm1}}}{t(k-1)}=
  3-4\xi_x\;\mbox{.}$$
Since we can choose the roots for almost every $x\in F_0$ that does not belong to the first segment with positive probability,
almost every such $x\in F_0$ satisfies (\ref{eq-dot-1}).
We conclude that $W_0(x,y)$ and $W_S(g(x),g(y))$ are equal for almost every pair $(x,y)\in F_0\times E_0$.

In the analogy to the first two constraints on the first last in Figure~\ref{fig-dot},
the first two constraints on the last line in the figure yield that
for almost every $z\in D_0$, there exists $\lambda_z$ such that
$W_0(z,y)=1$ for almost every $y\in G_0$ with $[g_D(z)]_1=[g_G(y)]_1$ and $\llbracket g_G(y)\rrbracket_1\le\lambda_z$ and
$W_0(z,y)=0$ for almost every $y\in G_0$ with $[g_D(z)]_1\not=[g_G(y)]_1$ or $\llbracket g_G(y)\rrbracket_1>\lambda_z$.
We now consider the last two constraints on the last line in Figure~\ref{fig-dot}.
Fix a choice of roots $x\in F_0$ and $z\in D_0$.
Almost every such choice of roots satisfies $[g_F(x)]_{2,3}=[g_D(z)]_{2,3}$, which implies that $[g_F(x)]_1=[g_D(z)]_1$.
The last but one constraint yields that $W_0(y,y')=0$ for almost every $y\in E_0$, $y'\in G_0$,
$[g_E(y)]_1=[g_F(x)]_1$, $[g_G(y)]_1=[g_D(z)]_1$,
$\llbracket g_E(y)\rrbracket_1\le\xi_x$ and $\llbracket g_G(y')\rrbracket_1\le\lambda_z$.
Similarly, the last constraint yields that $W_0(y,y')=1$ for almost every $y\in E_0$, $y'\in G_0$,
$[g_E(y)]_1=[g_F(x)]_1$, $[g_G(y)]_1=[g_D(z)]_1$,
$\llbracket g_E(y)\rrbracket_1>\xi_x$ and $\llbracket g_G(y')\rrbracket_1>\lambda_z$.
The structure of the graphon $W_0$ on $E_0\times G_0$ implies that
\begin{equation}
\lambda_z=\trunc{t(k-1)^{1/2}\left(\frac{1}{2}-\xi_x\right)+\frac{1}{2}}
\label{eq-dot-2}
\end{equation}
for almost every pair of $x\in F_0$ and $z\in D_0$ such that
$[g_F(x)]_{2,3}=[g_D(z)]_{2,3}$ and $[g_F(x)]_1=[g_D(z)]_1=k$.
The expression (\ref{eq-dot-1}) can be rewritten (for a particular choice of $x$ and $z$) as
\begin{equation}
\xi_x=\frac{1}{2} - \frac{\scalar{[ g_D(z)]_2^{\pm1}}{[g_D(z)]_{3}^{\pm1}}}{4t(k-1)}\;\mbox{.}
\label{eq-dot-3}
\end{equation}
We get by substituting (\ref{eq-dot-3}) in (\ref{eq-dot-2}) that
$$\lambda_z=\trunc{\frac{1}{2}+\frac{\scalar{[ g_D(z)]_2^{\pm1}}{[g_D(z)]_{3}^{\pm1}}}{4t(k-1)^{1/2}}}$$
for almost every $z\in D_0$ that belongs to the $k$-th segment.
We conclude that $W_0(z,y)$ and $W_S(g(z),g(y))$ are equal for almost every pair $(z,y)\in D_0\times G_0$.

\subsection{Main part}
\label{sub-main}

We now focus on the values of $W_0$ on $E_0^2$. The first constraint in Figure~\ref{fig-main} implies that
$W_0(x,y)=0$ for almost every $x,y\in E_0^2$ with $[g_E(x)]_1\not=[g_E(y)]_1$.
Let us consider the second constraint in the figure.
For almost every choice of the roots, all the roots belong to the same segment in their respective parts.
Let $k$ be the index of this segment and let $z$ and $z'$ be the choices of the bottom and the top roots that belong to $B_0$.
Further, let $S$ and $S'$ be the subsegments of $E_0$ corresponding to the subsegments of $z$ and $z'$ in $B_0$, respectively.
For almost every choice of the roots, the root from $E_0$ belongs to $S$.
The second constraint yields that it holds for almost all $x\in S$ that
\begin{equation}
2^{-k}\int\limits_{S'}W(x,y)\dd y=|S'|\cdot\frac{\trunc{\frac{1}{2} + \frac{\scalar{\llbracket g_B(z)\rrbracket_{1}^{\pm1}}{\llbracket g_B(z')\rrbracket_{1}^{\pm1}}}{4t(k-1)^{1/2}}}}{2^k}\;\mbox{.}
\label{eq-main-1}
\end{equation}
Along the same lines, the third constraint yields that it holds for almost all $x,x'\in S$ that
\begin{equation}
2^{-2k}\int\limits_{S'}W(x,y)W(x',y)\dd y=|S'|\frac{\left(\trunc{\frac{1}{2} + \frac{\scalar{\llbracket g_B(z)\rrbracket_{1}^{\pm1}}{\llbracket g_B(z')\rrbracket_{1}^{\pm1}}}{4t(k-1)^{1/2}}}\right)^2}{2^{2k}}\;\mbox{.}
\label{eq-main-2}
\end{equation}
By Lemma~\ref{lm-square}, the identity (\ref{eq-main-2}) implies that
\begin{equation}
\frac{1}{|S'|}\int\limits_{S'}W(x,y)^2\dd y=\left(\trunc{\frac{1}{2} + \frac{\scalar{\llbracket g_B(z)\rrbracket_{1}^{\pm1}}{\llbracket g_B(z')\rrbracket_{1}^{\pm1}}}{4t(k-1)^{1/2}}}\right)^2\;\mbox{.}
\label{eq-main-3}
\end{equation}
for almost every $x\in S$.

\begin{figure}[htbp]
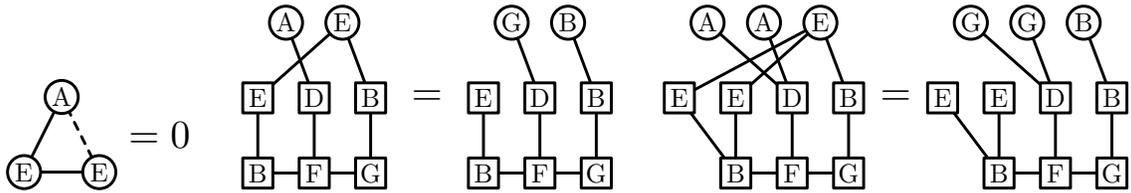

\begin{center}
\epsfbox{svejk.70} \hskip 5mm
\epsfbox{svejk.71} \hskip 5mm
\epsfbox{svejk.72}
\end{center}
\caption{The set of decorated constraints used in Subsection~\ref{sub-main}.}
\label{fig-main}
\end{figure}

We derive from (\ref{eq-main-1}) and (\ref{eq-main-3}) using Cauchy-Schwarz inequality that
$$W(x,y)=\trunc{\frac{1}{2} + \frac{\scalar{\llbracket g_B(z)\rrbracket_{1}^{\pm1}}{\llbracket g_B(z')\rrbracket_{1}^{\pm1}}}{4t(k-1)^{1/2}}}$$
for almost every $x\in S$ and every $y\in S'$.
Since $\llbracket g_B(z)\rrbracket_{1}^{\pm1}=\llbracket g_E(x)\rrbracket_{1}^{\pm1}$ and $\llbracket g_B(z')\rrbracket_{1}^{\pm1}=\llbracket g_E(y)\rrbracket_{1}^{\pm1}$
for almost every such pair $x$ and $y$,
we conclude that $W_0(x,y)$ and $W_S(g(x),g(y))$ are equal for almost every pair $(x,y)\in E_0^2$.

\subsection{Degree balancing}
\label{sub-degrees}

Fix $X\in\{A,\ldots,G,P\}$.
The first constraint in Figure~\ref{fig-degrees} implies that
$$\int\limits_{X_0} W_0(x,y)W_0(x',y)\dd y=K_X$$
for almost every $x,x'\in Q_0$ for some $K_X$.
By Lemma~\ref{lm-square}, this also implies that
$$\int\limits_{X_0} W_0(x,y)^2\dd y=K_X$$
for almost every $x\in Q_0$.
Hence, there exists a function $h_X:Q_0\to [0,1]$ such that $W_0(x,y)=h_X(y)$ for almost every $x\in Q_0$ and $y\in X_0$.
Since the last constraint on the first line yields that
$$\int\limits_{\overline{R_0}}W_0(x,y)\dd y=\frac{4}{13}$$
for almost every $x\in X_0$ and it holds that
$$\int\limits_{\overline{Q_0\cup R_0}}W_0(x,y)\dd y=\int\limits_{\overline{Q\cup R}}W_S(g_X(x),y)\dd y$$
for almost every $x\in X_0$ because of the already enforced structure,
we conclude that $W_0(x,y)$ and $W_S(g(x),g(y))$ are equal for almost every pair $(x,y)\in X_0\times Q_0$.

\begin{figure}[htbp]
\begin{center}
\epsfbox{svejk.73} \hskip 5mm
\epsfbox{svejk.74} \vskip 10mm
\epsfbox{svejk.75} \hskip 5mm
\epsfbox{svejk.76} \hskip 5mm
\epsfbox{svejk.77} \vskip 10mm
\epsfbox{svejk.78} \hskip 5mm
\epsfbox{svejk.79}
\end{center}
\caption{The set of decorated constraints used in Subsection~\ref{sub-degrees} where $X\in\{A,B,C,D,E,F,G,P\}$.}
\label{fig-degrees}
\end{figure}

The three constraints on the second line in Figure~\ref{fig-degrees} clearly implies that
$W_0(x,y)$ and $W_S(g(x),g(y))$ are equal for almost every pair $(x,y)\in (Q_0\cup R_0)^2$.

Again fix $X\in\{A,\ldots,G,P\}$.
The two constraints on the last line in Figure~\ref{fig-degrees} give using Lemma~\ref{lm-square} that
$$ \int_{R_0}W_0(x,y)\dd y=K_X|R_0|\; \mbox{ and } \int_{R_0}W_0(x,y)^2\dd y=K_X^2|R_0| $$
for almost every $x\in X_0$ for some $K_X$. 
However, this is only possible if $W_0(x,y)=K_X$ for almost every $x\in X_0$ and almost every $y\in R_0$.
The right side values of the two constraints yield that the values of $K_X$ matches the corresponding values in $W_S$.
So, we can conclude that $W_0(x,y)$ and $W_S(g(x),g(y))$ are equal for almost every pair $(x,y)\in X_0\times R_0$.
Since we have shown that the constraints in Figures~\ref{fig-coordinate}--\ref{fig-degrees} imply that
$W_0(x,y)$ and $W_S(g(x),g(y))$ are equal for almost every pair $x$ and $y$,
we have established that the \svejk graphon is finitely forcible. This completes the proof of Theorem~\ref{thm-main}. 

\section*{Concluding remark}

Proposition~\ref{prop-lower-graphon} gives that it is not possible
to remove $2^{5\log^*\varepsilon_i^{-2}}$ completely from the denominator in the exponent in Theorem~\ref{thm-main}.
However, our construction can be modified to replace $t(n)$ with a faster growing function of $n$, e.g.~with $t(t(n))$,
which would replace the function $2^{5\log^*\varepsilon_i^{-2}}$ with a slower growing function of $\varepsilon^{-1}$.
In fact, a recent result from~\cite{bib-ff} implies that $2^{5\log^*\varepsilon_i^{-2}}$
can be replaced with any Turing machine computable function of $\varepsilon^{-1}$ growing to infinity.

\section*{Acknowledgement}

The authors are grateful to Jacob Fox for sparking their interest in finitely forcible graphons
with weak regular partitions requiring a large number of parts and
for pointing out the tightness of their construction as formulated in Proposition~\ref{prop-lower-graphon}.
They would also like to thank Andrzej Grzesik, Sune Jakobsen and Fiona Skerman for discussions on the limit of the graphons $W^{\CF}_m$, and
Taisa Lopes Martins for her comments on one of the drafts of the paper.
Finally, they would like to thank the anonymous reviewer for her/his comments that
have helped to clarify and improve various aspects of the presentation of the results.

\newcommand{\advances}{Adv. Math. }
\newcommand{\annals}{Ann. of Math. }
\newcommand{\cpc}{Combin. Probab. Comput. }
\newcommand{\dcg}{Discrete Comput. Geom. }
\newcommand{\discrete}{Discrete Math. }
\newcommand{\eur}{European J.~Combin. }
\newcommand{\gfa}{Geom. Funct. Anal. }
\newcommand{\jcta}{J.~Combin. Theory Ser.~A }
\newcommand{\jctb}{J.~Combin. Theory Ser.~B }
\newcommand{\rsa}{Random Structures Algorithms }
\newcommand{\sidma}{SIAM J.~Discrete Math. }
\newcommand{\tcs}{Theoret. Comput. Sci. }

\end{document}